\numberwithin{equation}{section}
\theoremstyle{plain}
\newtheorem{theorem}[equation]{Theorem}
\newtheorem{lemma}[equation]{Lemma}
\newtheorem{proposition}[equation]{Proposition}
\newtheorem{hyp}[equation]{Hypotheses}
\newtheorem*{theorem*}{Theorem}
\newtheorem*{lemma*}{Lemma}
\newtheorem*{corollary*}{Corollary}
\newtheorem*{proposition*}{Proposition}
\newtheorem*{hyp*}{Hypotheses}
\theoremstyle{definition}
\newtheorem{defn}[equation]{Definition}
\newtheorem*{defn*}{Definition}
\theoremstyle{remark}
\newtheorem{rem}[equation]{Remark}
\newtheorem*{rem*}{Remark}
\newcommand{\C}{\ensuremath{\mathbb{C}}}
\newcommand{\N}{\ensuremath{\mathbb{N}}} 
\newcommand{\Z}{\ensuremath{\mathbb{Z}}}
\newenvironment{proofof}[1]{\par
  \pushQED{\qed}%
  \normalfont \topsep6\p@\@plus6\p@\relax
  \trivlist
  \item[\hskip\labelsep
        \bfseries
    Proof of #1\@addpunct{.}]\ignorespaces
}{%
  \popQED\endtrivlist\@endpefalse
}
\begin{document}
\title{Cocycle twists of algebras}
\author{Andrew Davies}
\address{School of Mathematics\\ University of Manchester\\  Manchester\\  United Kingdom\\  M13 9PL}
\email{andrewpdavies@gmail.com}
\subjclass{16S35, 16S38, 16W22}
% Respectively: group rings, noncomm alg rings, invariant theory
\keywords{Cocycle twists, Zhang twists, AS-regular}
\date{\today}

\begin{abstract}
Let $A = \bigoplus_{n=0}^{\infty}A_n$ be a connected graded $k$-algebra over an algebraically closed field $k$ (thus $A_0=k$). Assume that a finite abelian group $G$, of order coprime to the characteristic of $k$, acts on $A$ by graded automorphisms. In conjunction with a suitable cocycle this action can be used to twist the multiplication in $A$. We study this new structure and, in particular, we describe when properties like Artin-Schelter regularity are preserved by such a twist. We then apply these results to examples of Rogalski and Zhang.
\end{abstract}

\maketitle

\section{Introduction}\label{sec: introduction}

In this paper we study a twisting operation on algebras that can be formulated as a Zhang twist \cite{zhang1998twisted} or defined in the context of group actions as in \cite[\S 7.5]{montgomery1993hopf}. We will be primarily concerned with whether certain properties are preserved under such twists, following the work of Montgomery in \cite{montgomery2005algebra}. 

If an algebra $A$ is acted on by a finite abelian group $G$ then a $G$-grading is induced. We will denote a cocycle twist of this grading using a 2-cocycle $\mu$ by $A^{G,\mu}$. Such twists can be described in another manner, with the following result of Bazlov and Berenstein showing their equivalence.
\begin{proposition}[{\cite[Lemma 3.6]{bazlov2012cocycle}}]
The algebra $A^{G,\mu}$ is isomorphic to the fixed ring $(AG_{\mu})^G$ for some action of $G$ on the twisted group algebra $AG_{\mu}$.     
\end{proposition}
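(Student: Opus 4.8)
The plan is to use the duality of finite abelian groups to describe both sides explicitly as graded vector spaces, and then write down the isomorphism by hand. Write $\widehat{G}=\mathrm{Hom}(G,k^{\times})$; since $k$ is algebraically closed and $|G|$ is coprime to $\mathrm{char}\,k$, the group algebra $kG$ is split semisimple, $G\cong\widehat{G}$, and the $G$-action on $A$ is the same datum as an isotypic decomposition $A=\bigoplus_{\chi\in\widehat{G}}A_{\chi}$ with $A_{\chi}A_{\psi}\subseteq A_{\chi\psi}$ and $A_{1}=A^{G}$; under a fixed isomorphism $\widehat{G}\cong G$ this is the $G$-grading $A=\bigoplus_{g\in G}A_{g}$ in the statement, and $A^{G,\mu}$ is $A$ equipped with the deformed product $a\ast b=\mu(g,h)\,ab$ for $a\in A_{g}$, $b\in A_{h}$. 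On the other side I would first unwind the definition of $AG_{\mu}$: it is built on $A\otimes_{k}kG_{\mu}$, where $kG_{\mu}$ is the twisted group algebra with basis $\{u_{g}\}_{g\in G}$ and $u_{g}u_{h}=\mu(g,h)u_{gh}$, and it carries a $G$-action; I expect this action to be (the transport through $\widehat{G}\cong G$ of) the diagonal action of $\widehat{G}$ acting on $A$ by $\chi\cdot a=\chi(g)a$ for $a\in A_{g}$ and on $kG_{\mu}$ by $\chi\cdot u_{g}=\chi(g)^{-1}u_{g}$.

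The substantive steps are then: (i) check that $\chi\cdot u_{g}=\chi(g)^{-1}u_{g}$ defines an action of $\widehat{G}$ by algebra automorphisms of $kG_{\mu}$ (immediate from $\chi$ being a character), so that the diagonal action on $AG_{\mu}$ makes sense; (ii) compute the fixed ring, using that each $u_{g}$ spans an isotypic line of $kG_{\mu}$ and that $\chi$ scales $A_{g}\otimes ku_{h}$ by $\chi(gh^{-1})$, to obtain
\[
 (AG_{\mu})^{G}=\bigoplus_{g\in G}A_{g}\otimes ku_{g},
\]
which already matches $A^{G,\mu}$ as a graded vector space; (iii) verify that $\Phi\colon A^{G,\mu}\to(AG_{\mu})^{G}$, $\Phi(a)=a\otimes u_{g}$ for $a\in A_{g}$, is multiplicative, since $\Phi(a)\Phi(b)=(a\otimes u_{g})(b\otimes u_{h})=\mu(g,h)\,ab\otimes u_{gh}=\Phi(a\ast b)$. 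As $G\cong\widehat{G}$, this produces the required $G$-action and finishes the proof.

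I expect the real obstacle to be entirely one of bookkeeping and conventions: keeping straight the three roles of the group --- the group acting on $A$, the grading group, and the dual $\widehat{G}$ under which invariants are taken --- and choosing the twist on $kG_{\mu}$ and the identification $\widehat{G}\cong G$ so that the structure constants of $(AG_{\mu})^{G}$ come out as $\mu(g,h)$ rather than, say, $\mu(g^{-1},h^{-1})$ or $\mu(g,h)^{-1}$. If the definition of $AG_{\mu}$ forces a less convenient convention --- for instance if $AG_{\mu}$ is a crossed product $A\rtimes_{\mu}G$ instead of a tensor product --- then $\Phi$ is multiplicative only after rescaling each $A_{g}$ by a scalar $\lambda_{g}\in k^{\times}$, and one must argue such $\lambda_{g}$ exist: the required identity says a certain discrepancy bicharacter is the coboundary $\delta(\lambda)$, which follows from the standard facts that a symmetric bicharacter over an algebraically closed field is a coboundary and that $A^{G,\mu}$ depends on $\mu$ only up to cohomology. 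Pinning down the conventions so that the remaining computations are mechanical is where essentially all the care goes.
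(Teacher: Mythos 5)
Your argument is correct and is essentially the proof of the cited result: the paper itself only quotes \cite[Lemma 3.6]{bazlov2012cocycle}, but its conventions ($A_g = A^{\chi_{g^{-1}}}$, $g^h = \chi_g(h)g$, and $AG_\mu = A\otimes kG_\mu$ with central scalar cocycle values) make the diagonal invariants exactly $\bigoplus_{g\in G} A_g\otimes kg$, and the map $a\mapsto a\otimes g$ is multiplicative on the nose, as in your step (iii). Your fallback about rescaling by $\lambda_g$ is not needed here, and your description of the fixed ring is precisely the isotypic decomposition the paper later exploits in the proof of Lemma \ref{lem: fflat}.
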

 
We show that applying such twists --- in particular to connected graded algebras when the action of $G$ respects this structure --- yields some interesting results. 
 
Our main theorem is as follows. It brings together Proposition \ref{prop: uninoeth}, Theorem \ref{thm: asreg} and Propositions \ref{prop: koszul} and \ref{prop: cohenmac}.
\begin{theorem}\label{thm: maintheorem}
Assume that $A$ is a noetherian connected graded algebra and $G$ a finite abelian group that acts on $A$ by graded automorphisms. If $A$ has one of the following properties then the cocycle twist $A^{G,\mu}$ shares that property:
\begin{itemize}
\item[(i)] it is strongly noetherian;
\item[(ii)] it is AS-regular;
\item[(iii)] it is Koszul;
\item[(iv)] it is Auslander regular;
\item[(v)]  it is Cohen-Macaulay.
\end{itemize}
\end{theorem}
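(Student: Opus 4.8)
The plan is to realise the cocycle twist $A^{G,\mu}$ in the two ways already on the table and to play them off against one another. On one hand, since $k$ is algebraically closed and $|G|$ is coprime to $\operatorname{char} k$, the group algebra $kG$ is split semisimple, so the $G$-action decomposes $A$ into a $\widehat G$-grading $A=\bigoplus_{\chi\in\widehat G}A_\chi$ by isotypic components, and the $2$-cocycle $\mu$ simply rescales the multiplication of homogeneous elements. Fixing an isomorphism $\widehat G\cong G$, this exhibits $A^{G,\mu}$ as a twist of $A$ by a twisting system $\tau=\{\tau_n\}_{n\ge 0}$ in the sense of Zhang; because $G$ acts by graded automorphisms the $\widehat G$-grading refines the $\N$-grading, so $A^{G,\mu}$ is again connected $\N$-graded with $\dim_k (A^{G,\mu})_n=\dim_k A_n$, and Zhang's theorem supplies an equivalence of graded module categories $\operatorname{GrMod} A\simeq\operatorname{GrMod} A^{G,\mu}$ carrying $A\mapsto A^{G,\mu}$ and $k\mapsto k$, preserving the internal grading and Hilbert series. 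On the other hand, by the quoted result of Bazlov--Berenstein, $A^{G,\mu}\cong(AG_\mu)^G$, where $AG_\mu$ is free of rank $|G|$ as a left (and right) $A$-module; this is the description to use for finiteness conditions that involve base change.

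For (i) I would argue entirely on the fixed-ring side. If $A$ is strongly noetherian then so is any finite module extension of it, in particular $AG_\mu$, since tensoring with a commutative noetherian $k$-algebra $R$ keeps $AG_\mu\otimes_k R$ finite over the noetherian ring $A\otimes_k R$. Next, because $|G|$ is invertible in $k$, base change commutes with taking invariants, $(AG_\mu\otimes_k R)^G=(AG_\mu)^G\otimes_k R$, and the Reynolds operator $\frac{1}{|G|}\sum_{g\in G}g$ is an idempotent $(AG_\mu)^G$-bimodule projection of $AG_\mu$ onto $(AG_\mu)^G$; the usual contraction-of-ideals argument then shows that the fixed ring of a noetherian ring (with $|G|$ invertible) is noetherian. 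Applying this to $AG_\mu\otimes_k R$ for every such $R$ shows that $A^{G,\mu}=(AG_\mu)^G$ is strongly noetherian.

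Properties (ii)--(v) I would deduce from the category equivalence of the first paragraph, which is exactly the route of Theorem~\ref{thm: asreg} and Propositions~\ref{prop: koszul} and~\ref{prop: cohenmac}. Since the equivalence fixes $k$, preserves the internal grading, and sends (minimal) graded projective resolutions to (minimal) graded projective resolutions, it preserves $\operatorname{pd}_A k=\operatorname{gldim}A$ and the linearity of the resolution of $k$; with the preservation of Hilbert series (hence of GK-dimension) this yields Koszulity (iii) and the finite global dimension together with finite GK-dimension needed in (ii) and (iv). The Gorenstein condition $\underline{\operatorname{Ext}}^i_A(k,A)\cong\delta_{i,d}\,k(\ell)$, the Auslander condition on the grades $j\big(\underline{\operatorname{Ext}}^q_A(N,A)\big)$ for submodules $N$ of $\operatorname{Ext}$-modules, and the Cohen--Macaulay equality $j(M)+\operatorname{GKdim}M=\operatorname{GKdim}A$ are all statements in the graded module category about $\operatorname{Ext}$'s against the ring, grades, and GK-dimension, so they transport across the equivalence; this gives (ii), (iv) and (v).

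The main obstacle is the verification underlying the first paragraph: producing a genuine Zhang twisting system from the $\widehat G$-grading and the cocycle $\mu$ and checking Zhang's cocycle-type axioms, and then checking that the induced equivalence is fine enough to carry the precise homological packages above --- in particular that it respects minimality and linearity of free resolutions and the exact shape of the AS-Gorenstein and Auslander conditions, not merely the isomorphism type of the module category. (For (i) the analogous delicate point, already dispatched above, is the compatibility of invariants with base change, which is why the fixed-ring model rather than the twisting model is used there.) Everything else is bookkeeping once the $\widehat G$-eigenspace decomposition is available, which is where the hypotheses that $k$ is algebraically closed and $|G|$ is coprime to $\operatorname{char}k$ are consumed.
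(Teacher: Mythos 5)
Your argument for part (i) is essentially correct and takes a mildly different route from the paper: where Proposition \ref{prop: uninoeth} descends the strongly noetherian property from $AG_{\mu}$ to $A^{G,\mu}$ by faithfully flat descent (quoting Artin--Small--Zhang), you descend it via the Reynolds operator $\frac{1}{|G|}\sum_{g}g$ and contraction of ideals, using that invariants commute with $-\otimes_k R$. Both work; the first step ($AG_{\mu}\otimes_k R$ is finite over the noetherian ring $A\otimes_k R$) is common to both.

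For parts (ii)--(v), however, there is a genuine gap, and it is exactly the one you flag as ``the main obstacle'' without resolving. The twisting system implementing the cocycle twist is indexed by $G$, not by $\N$: the scalar $\mu(g,h)$ attached to a product $a\ast_\mu b$ depends on the $G$-degrees of $a$ and $b$, and for a fixed $\N$-degree $n$ the component $A_n$ generally meets several $G$-isotypic components, so there is no family $\{\tau_n\}_{n\ge 0}$ of graded automorphisms with $a\ast_\mu b=a\tau_n(b)$. Consequently Zhang's theorem gives an equivalence of categories of \emph{$G$-graded} modules, not of $\operatorname{GrMod}_{\N}$, and his preservation results for connected $\N$-graded algebras (minimal linear resolutions, AS-regularity, global dimension) do not apply; indeed $G$ is a finite group, hence not an ordered semigroup, which is precisely the regime where Zhang-twist transfer fails --- the paper points out that even the domain property is lost in general. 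Two further points would remain even with a $G$-graded equivalence in hand: the Cohen--Macaulay and Auslander conditions quantify over \emph{all} finitely generated modules, not only the $G$-gradable ones; and the AS-Gorenstein and Koszul conditions concern the $\Z$-graded structure of $\operatorname{Ext}$ groups computed in the full (or $\N$-graded) module category, which a $G$-graded equivalence does not obviously control. The paper's actual mechanism avoids all of this: by Lemma \ref{lem: fflat} both $A$ and $A^{G,\mu}$ embed in $AG_{\mu}$ as bimodule direct summands with $AG_{\mu}$ free of finite rank over each on either side; one then applies the Brown--Levasseur flat base-change isomorphism (Proposition \ref{prop: brownlevass}) twice to identify $\operatorname{Ext}^i_A(k,A)\otimes_A AG_{\mu}$ and $\operatorname{Ext}^i_{A^{G,\mu}}(k,A^{G,\mu})\otimes_{A^{G,\mu}} AG_{\mu}$ with the same $\operatorname{Ext}^i_{AG_{\mu}}(kG_{\mu},AG_{\mu})$ and descends by faithful flatness (Propositions \ref{prop: asgor}, \ref{prop: koszul}), handles global dimension by crossed-product results (Proposition \ref{prop: gldim}), grades and GK-dimension of arbitrary modules by induction along the finite free extension (Proposition \ref{prop: cohenmac}), and injective dimension via strongly $G$-graded ring theory. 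Your proposal would need either to carry out this $AG_{\mu}$-mediated argument or to build and verify an $(\N\times G)$-graded equivalence with all the compatibilities you assert; as written, those assertions are unsupported.
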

Moreover, some of the twists we have uncovered have not been studied previously (see \cite{davies2014cocycle2}).
 
Our ability to prove that properties are preserved under twisting stems from the fact that one of the constructions of such twists has not been fully exploited (it was briefly remarked upon in \cite[\S 3.4]{bazlov2012cocycle}). This formulation generalises an example of Odesskii from \cite[pg. 89-90]{odesskii2002elliptic}, and is especially useful since it allows the use of faithful flatness arguments via the following lemma.
\begin{lemma}[{Lemma \ref{lem: fflat}}]\label{lem: fflat-intro}
As an $(A^{G,\mu},A^{G,\mu})$-bimodule there is a decomposition
\begin{equation*}%\label{eq: decompbimod}
AG_{\mu} \cong \bigoplus_{g \in G} {^{\text{id}}(A^{G,\mu})^{\phi_{g}}},
\end{equation*}
for some automorphisms $\phi_{g}$ of $A^{G,\mu}$, with $\phi_e=\text{id}$. Each summand is free as a left and right $A^{G,\mu}$-module. Consequently $AG_{\mu}$ is a faithfully flat extension of both $A^{G,\mu}$ and $A$ on both the left and the right.
\end{lemma}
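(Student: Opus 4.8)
The plan is to work inside the concrete ``Odesskii-style'' model of $AG_\mu$ introduced above, namely the crossed product $AG_\mu = \bigoplus_{g\in G} A u_g$ of $A$ by $G$ twisted by $\mu$, in which $u_e = 1$, each $u_g$ is a unit with $u_g u_h = \mu(g,h)u_{gh}$ and $u_g a u_g^{-1} = {}^{g}a$, and in which the embedded copy of $A^{G,\mu}$ is spanned by the elements $a\,u_{g_\chi}$ with $a\in A_\chi$ (the $\chi$-isotypic component of the $G$-action) and $\chi\mapsto g_\chi$ a fixed identification $\widehat G\cong G$; such a decomposition $A=\bigoplus_\chi A_\chi$ is available because $|G|$ is invertible in $k$ and $k=\bar k$. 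I will take the candidate basis elements to be $v_g := u_g$, so in particular $v_e = 1$.

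\textbf{The decomposition.} Right-multiplying the spanning set of $A^{G,\mu}$ by $u_g$ shows that, inside $AG_\mu = \bigoplus_{h} A u_h$, the subspace $A^{G,\mu}u_g$ has $h$-component equal to the single isotypic piece $A_\chi$ with $g_\chi = hg^{-1}$. Summing over $g\in G$ recovers all of $A u_h$ in each component, so $\sum_{g} A^{G,\mu}u_g = AG_\mu$; and since in every graded degree $n$ one has $\dim_k(A^{G,\mu}u_g)_n = \dim_k A_n$ (as $u_g$ is a unit) while $\dim_k(AG_\mu)_n = |G|\dim_k A_n$, this sum is direct. Thus $AG_\mu = \bigoplus_{g\in G} A^{G,\mu}u_g$ as a left $A^{G,\mu}$-module, and since $G$ is abelian the subspace $A^{G,\mu}u_g$ coincides with $u_g A^{G,\mu}$, so this is simultaneously a decomposition into right $A^{G,\mu}$-submodules. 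Next, conjugation by the unit $u_g$ is an automorphism of $AG_\mu$ which, by the relations above (using $u_g a u_g^{-1} = \chi(g)a$ for $a\in A_\chi$, and that $u_g u_{g_\chi} u_g^{-1}$ is a nonzero scalar times $u_{g_\chi}$ because $G$ is abelian), carries the embedded $A^{G,\mu}$ onto itself; its restriction is therefore an algebra automorphism $\phi_g$ of $A^{G,\mu}$ with $\phi_e = \text{id}$. For $r,s\in A^{G,\mu}$ one then computes $r(xu_g)s = rx(u_g s u_g^{-1})u_g = rx\,\phi_g(s)\,u_g$, so $xu_g\mapsto x$ identifies $A^{G,\mu}u_g$ with the bimodule ${}^{\text{id}}(A^{G,\mu})^{\phi_g}$, and summing over $g$ yields the asserted isomorphism.

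\textbf{Freeness and faithful flatness.} Each ${}^{\text{id}}(A^{G,\mu})^{\phi_g}$ is free of rank one as a left $A^{G,\mu}$-module (the twist affects only the right action), and because $\phi_g$ is an automorphism the element $1$ also freely generates it on the right; hence $AG_\mu$ is free of rank $|G|$ on both sides over $A^{G,\mu}$. A nonzero free module is faithfully flat, which gives the claim for $A^{G,\mu}$; for $A$ itself it is immediate from $AG_\mu = \bigoplus_g A u_g = \bigoplus_g u_g A$.

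The only genuine work lies in the decomposition step: pinning down exactly how $A^{G,\mu}$ sits inside the concrete model of $AG_\mu$, verifying through the graded dimension count that the $u_g$ really do form a one-sided $A^{G,\mu}$-basis, and checking that conjugation by them preserves this subalgebra so that the $\phi_g$ are defined. Everything downstream — recognising each summand as ${}^{\text{id}}(A^{G,\mu})^{\phi_g}$ and deducing faithful flatness from freeness — is formal.
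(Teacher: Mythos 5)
Your overall strategy coincides with the paper's: decompose $AG_{\mu}$ as $\bigoplus_{g\in G}A^{G,\mu}u_{g}$ where $u_{g}=1\otimes g$ (the paper obtains these summands as the isotypic components of the diagonal $G$-action), observe each is free of rank one on either side since $u_{g}$ is a unit, and realise $\phi_{g}$ as conjugation by $u_{g}$ restricted to $A^{G,\mu}$. Your way of producing $\phi_{g}$ — as the restriction of an inner automorphism of $AG_{\mu}$, so that multiplicativity is automatic once you check the subalgebra is preserved — is in fact a little cleaner than the paper's, which writes $\phi_{g}(a\otimes h)=\tfrac{\mu(h,g)}{\mu(g,h)}(a\otimes h)$ explicitly and verifies the homomorphism property with the cocycle identity.

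There is, however, an error in your model of $AG_{\mu}$. In the paper $AG_{\mu}$ is the tensor product of algebras $A\otimes kG_{\mu}$, so $A$ commutes elementwise with the units $u_{g}$: the crossed-product structure has \emph{trivial} action, $u_{g}au_{g}^{-1}=a$. You instead impose $u_{g}au_{g}^{-1}={}^{g}a$, i.e.\ you work in the $\mu$-twisted skew group ring, which is generally a non-isomorphic algebra (for $A=k[x]$, $G=C_{2}$ acting by $x\mapsto -x$ and $\mu$ trivial, one is commutative and the other is not). The concrete casualty is your identification of the subalgebra: with your relations, for $a\in A_{\chi}$ and $b\in A_{\psi}$ one computes $(au_{g_{\chi}})(bu_{g_{\psi}})=\psi(g_{\chi})^{\pm 1}\mu(g_{\chi},g_{\psi})\,ab\,u_{g_{\chi\psi}}$, so the span of the elements $au_{g_{\chi}}$ carries the multiplication twisted by $\mu\cdot\beta$ with $\beta(\chi,\psi)=\psi(g_{\chi})^{\pm 1}$ a (generally non-cobounding) bicharacter — it is $A^{G,\mu\beta}$, not $A^{G,\mu}$. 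As written, then, the argument proves the decomposition for a different pair of algebras than the one in the statement. The repair is simply to delete the action from the commutation relation; everything else you wrote then goes through and agrees with the paper's proof. A minor further point: your directness argument via graded dimension counts invokes the $\N$-graded hypotheses, which are not assumed in this lemma, but it is also redundant — directness already follows from your own observation that the $h$-component of $A^{G,\mu}u_{g}$ is the single isotypic piece indexed by $hg^{-1}$.
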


Let us describe Odesskii's example, which uses a 4-dimensional Sklyanin algebra. Such an object is important in noncommutative algebraic geometry in the sense of \cite{artin1990some}, whose construction can be phrased in terms of an elliptic curve and a point upon it. 
% The example was given to show that there are other graded algebras that depend on such parameters and have the same Hilbert series, namely $(1-t)^{-4}$. 

Consider a 4-dimensional Sklyanin algebra over $\C$, which we denote by $A$; its parameters and relations are unimportant for the purpose of the example. There is an action of the Klein four-group $G=(C_2)^2$ on $A$ by graded algebra automorphisms and also on $M_2(\C)$, the ring of $2 \times 2$ complex matrices. For $M \in M_2(\C)$ and generators $g_1, g_2 \in G$, the action is defined by
\begin{equation}\label{eq: matrixaction}
M^{g_{1}} =\begin{pmatrix}
-1 & 0 \\
0 & 1 
\end{pmatrix}M\begin{pmatrix}
-1 & 0 \\
0 & 1 
\end{pmatrix},\; M^{g_{2}}=\begin{pmatrix}
0 & 1 \\
1 & 0 
\end{pmatrix}M\begin{pmatrix}
0 & 1 \\
1 & 0 
\end{pmatrix}.
\end{equation}
Now take the tensor product of $\C$-algebras $A \otimes_{\C} M_2(\C)$. The example is then given by taking the invariant ring under the diagonal action of $G$, $\left(A \otimes_{\C} M_2(\C)\right)^G$. 

It is natural to wonder if the properties of $A$ are shared by this twisted algebra and if this construction can be generalised to any algebra or group. Our attempts to answer these questions motivated the work in this paper. In the subsequent paper \cite{davies2014cocycle2} we study twists of 4-dimensional Sklyanin algebras and related algebras from the perspective of noncommutative algebraic geometry. Theorem \ref{thm: maintheorem} applies to such algebras, providing new examples of AS-regular algebras of global dimension 4.

In a paper appearing on the arXiv in January 2015, Chirvasitu and Smith \cite{chirvasitu2015exotic} prove some of the same results. As noted in their paper, our results had already appeared on the internet in July 2014 \cite{davies2014thesis}.

\subsection{Contents} 
We now give a brief description of the contents of this paper. In \S\ref{sec: background} we define classical cocycle twists of group-graded algebras, while in \S\ref{sec: construction} we construct the cocycle twists that we will study in two different ways. Our main results appear in \S\ref{sec: preservation}, where we prove both Theorem \ref{thm: maintheorem} and Lemma \ref{lem: fflat-intro}.

To end the paper we look for cocycle twists in the context of Rogalski and Zhang's classfication of AS-regular algebras of dimension 4 with three generators and a proper $\Z^{2}$-grading \cite{rogalski2012regular}. We show in \S\ref{sec: rogzhang} that several of the families in their classification are related via cocycle twists.

\subsection{Notation}\label{subsec: notation}
Throughout $k$ will denote an algebraically closed field and $G$ a finite abelian group, unless otherwise stated.
Further assumptions on the characteristic of $k$ will be made at the beginning of \S\ref{sec: construction}. By $A$ we
will denote an associative $k$-algebra, often with the additional assumption that it is $\N$-graded. This means that $A
= \bigoplus_{n \in \N} A_n$ with $ab \in A_{n+m}$ for all $a \in A_n$ and $b \in A_m$. Such an algebra is said
to be \emph{connected graded} (or \emph{c.g.}\ for brevity) if $A_0=k$ and $\text{dim}_k A_n < \infty$ for all $n \in
\N$. The \emph{Hilbert series} of a c.g.\ algebra is the power series $H_A(t)=\sum_{n \in \N} (\text{dim}_k A_n)t^n$.

When describing relations in such an algebra, we will use shorthand notation for two kinds of commutator. For $x,y \in A$, define
\begin{equation*}
[x,y]:=xy-yx \text{ and } [x,y]_+ := xy+yx.
\end{equation*}

By $\text{Mod}(A)$ we shall denote the category of $A$-modules and by $\text{GrMod}(A)$ the category of $\N$-graded $A$-modules. If necessary we will specify whether these are categories of left $A$-modules or right $A$-modules. By $\text{lgldim }A$ and $\text{rgldim }A$ we will denote the left and right global dimensions of $A$ respectively, and by $\text{pdim }M_A$ ($\text{idim }M_A$) the projective (injective) dimension of a right $A$-module $M$. When $\text{lgldim }A = \text{rgldim }A$ we will write $\text{gldim }A$.

We will often consider an action of the group $G$ on $A$ by $k$-algebra automorphisms. The action of an element $g\in G$ on $a \in A$ will be denoted by the superscript $a^g$. We will denote the group of group automorphisms of $G$ by $\text{Aut}(G)$, and similarly use $\text{Aut}_{\N\text{-alg}}(A)$ to denote the group of $\N$-graded $k$-algebra automorphisms of $A$ when it is graded.

The tensor product $\otimes$ will denote the tensor product over $k$, $\otimes_k$, if no other subscript appears.

\section*{Acknowledgements}
The contents of this paper form part of the author's PhD thesis \cite{davies2014thesis}, completed under the supervision of Professor Toby Stafford. The author wishes to express their gratitude to Professor Stafford for his support and guidance throughout both the preparation of this paper and their PhD, as well as to the EPSRC for funding their study.

\section{Background}\label{sec: background}
We begin by defining cocycle twists of group-graded algebras, a classical construction that underpins the twists that we construct in \S\ref{sec: construction}.

Assume that $A$ is an associative $k$-algebra and $G$ a finite group, not necessarily abelian, such that $A$ admits a $G$-graded structure $A = \bigoplus_{g \in G} A_g$. Consider all functions $\mu: G \times G
\rightarrow k^{\times}$ satisfying the following relations for all $g,h,l \in G$:
\begin{equation}\label{eq: cocycleid}
\mu(g,h)\mu(gh,l)=\mu(g,hl)\mu(h,l),\; \mu(e,g)=\mu(g,e)=1.
\end{equation}
Such a function is called a \emph{2-cocycle of $G$ with values in $k^{\times}$} (or more formally a \emph{normalised} 2-cocycle). One can define a group structure on the set of 2-cocycles of $G$, denoted by $Z^2(G,k^{\times})$, via pointwise multiplication.

A new multiplication $\ast_{\mu}$ can be defined for all homogeneous elements $a \in A_g$ and $b \in A_h$ by 
\begin{equation*}%\label{eq: cocyclenewmult}
a \ast_{\mu} b := \mu(g,h) ab, 
\end{equation*}
where juxtaposition denotes the `old' multiplication in $A$. The new multiplication is then extended to the whole of $A$ by linearity. 
\begin{defn}\label{defn: basiccocycletwist}
With $\mu$ as above, set $A_{\mu} := (A, \ast_{\mu})$.
\end{defn}

2-cocycles are precisely those functions $G \times G \rightarrow k^{\times}$ that preserve associativity when deforming the multiplication in an algebra in this manner. Moreover, twisting by a 2-cocycle preserves the identity element of an algebra. The simplest examples of such twists are twisted group algebras, denoted by $kG_{\mu}$. 

Now consider a 2-cocycle $\phi$ for which there exists a function $\rho: G \rightarrow k^{\times}$ such that
\begin{equation*}%\label{eq: trivialcocycle}
\phi(g,h)=\rho(g)\rho(h)\rho(gh)^{-1}, 
\end{equation*}
for all $g, h \in G$. Such a cocycle is called a \emph{2-coboundary}. As \cite[Corollary 33.6]{karpilovsky1987structure} shows in a more general situation, the twisted group algebras $kG_{\mu}$ and $kG_{\phi}$ are isomorphic as $G$-graded algebras if and only if $\mu, \phi \in Z^2(G,k^{\times})$ lie in the same coset modulo the subgroup of coboundaries, $B^2(G,k^{\times})$. Thus the isomorphism classes of $G$-graded deformations of $kG$ are parameterised by
$Z^2(G,k^{\times})/B^2(G,k^{\times})$, which is called the \emph{Schur multiplier} of $G$ \cite{karpilovsky1987schur}.

As \cite[Example 2.9]{zhang1998twisted} shows, the cocycle twists defined in Definition \ref{defn: basiccocycletwist} can be formulated as Zhang twists. For cocycle twists of $G$-graded algebras one therefore has an equivalence between their categories of $G$-graded modules by \cite[Theorem 3.1]{zhang1998twisted}. 

Finally, it will be useful for us to consider the twisted group algebra $AG_{\mu}$ as a crossed product, whose
definition appears in \cite[\S 1.5.8]{mcconnell2001noncommutative}.

\section{Constructions}\label{sec: construction}

Let us fix the base assumptions under which we will work.
\begin{hyp}[General case]\label{hyp: generalcase}
Let $A$ be a $k$-algebra where $k$ is an algebraically closed field. Assume that a finite abelian group $G$ acts on $A$ by algebra automorphisms, where $\text{char}(k) \nmid |G|$. Fix an isomorphism between $G$ and its group of characters $G^{\vee}$, mapping $g \mapsto \chi_g$.  
\end{hyp}

Our primary interest in cocycle twists is to apply them to $\N$-graded algebras. As such, we record the following additional assumptions that will often be used.
\begin{hyp}[$\N$-graded case]\label{hyp: gradedcase}
Further to Hypotheses \ref{hyp: generalcase}, assume that $A$ is $\N$-graded and $G$ acts on $A$ by $\N$-graded algebra
automorphisms, i.e. $G \rightarrow \text{Aut}_{\N\text{-alg}}(A)$.
\end{hyp}

\subsection{Cocycle twists from group actions}\label{subsec: coycletwistgroupaction}
In \S\ref{sec: background} we defined twists of a $G$-graded algebra. We will now show that when $G$ is finite
abelian a $G$-grading is induced by an action of $G$ on $A$ by algebra automorphisms. 
% This leads to a special case of
% the construction for Hopf algebras in \cite[\S 7.5]{montgomery1993hopf} (by taking $H=kG$ with the standard Hopf
% algebra structure).

Assume that Hypotheses \ref{hyp: generalcase} hold. By Maschke's Theorem, $A$ splits into a direct sum of 1-dimensional irreducible $kG$-submodules. One defines a grading on $A$ by setting $A_g:=A^{\chi_{g^{-1}}}$, the isotypic component of $A$ corresponding to the character $\chi_{g^{-1}}$. To see that this defines a grading, note that for all homogeneous elements $a \in A_{g_{1}}, b \in A_{g_{2}}$ and $h \in G$,
\begin{equation*}%\label{eq: twistgrad}
(ab)^h=a^hb^h=\chi_{g_{1}^{-1}}(h)a \chi_{g_{2}^{-1}}(h)b=\chi_{(g_1g_2)^{-1}}(h)ab,
\end{equation*}
which implies that $ab \in A_{g_{1}g_{2}}$.
\begin{defn}\label{defn: fullcocycletwist}
With the $G$-graded structure described above and a cocycle $\mu$, the resulting twisted algebra is written $A^{G,\mu}:= (A = \bigoplus_{g \in G} A_g, \ast_{\mu})$. Thus for all $a \in A_{g}, b \in A_{h}$ one has $a \ast_{\mu} b = \mu(g,h)ab$.
\end{defn}

We now describe another construction, once again working under Hypotheses \ref{hyp: generalcase}. Define an action of
$G$ on
the twisted group algebra $kG_{\mu}$ by $g^{h}:=\chi_g(h)g$ for all $g, h \in G$ and extending $k$-linearly. Observe
that under this action $kG_{\mu}$ is the regular representation of $G$, with isotypic components of the form
$\left(kG_{\mu}\right)^{\chi_{g}}=kg$. 

Given this action we can consider the diagonal action of $G$ on the tensor product $A \otimes kG_{\mu} = AG_{\mu}$, where $(ag)^h=a^h g^h= \chi_{g}(h)a^h g$ for all $a \in A$, $g, h \in G$. The algebra in which we are interested is the invariant ring under this action, $(A \otimes G_{\mu})^G=(AG_{\mu})^G$.

The constructions just defined are related by the following result of Bazlov and Berenstein.
\begin{proposition}[{\cite[Lemma 3.6]{bazlov2012cocycle}}]\label{prop: twoconstrequal}
Assume that Hypotheses \ref{hyp: generalcase} hold. Then $A^{G,\mu} \cong (AG_{\mu})^G$ as $k$-algebras.
\end{proposition}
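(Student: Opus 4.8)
The plan is to identify the invariant ring $(AG_\mu)^G$ explicitly and then check that the evident map from $A^{G,\mu}$ is a bijective algebra homomorphism onto it. First I would write a general element of $AG_\mu = A \otimes kG_\mu$ in the form $x = \sum_{g \in G} a_g\, g$ with $a_g \in A$. Since the diagonal action is given by $(a_g g)^h = \chi_g(h)\, a_g^h\, g$, the element $x$ is $G$-invariant precisely when $\chi_g(h)\, a_g^h = a_g$ for all $g,h \in G$, i.e. when $a_g^h = \chi_g(h)^{-1} a_g = \chi_{g^{-1}}(h)\, a_g$ for every $h$. By construction this says exactly that $a_g$ lies in the isotypic component $A_g = A^{\chi_{g^{-1}}}$, and conversely every such $a_g$ gives an invariant. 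Using the decomposition $A = \bigoplus_{g} A_g$ (Maschke's Theorem, as above), this yields $(AG_\mu)^G = \bigoplus_{g \in G} A_g\, g$, an internal direct sum of $k$-subspaces of $AG_\mu$.

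Next I would define $\Phi \colon A^{G,\mu} \to (AG_\mu)^G$ by $\Phi(a) = a\,g$ for homogeneous $a \in A_g$, extended $k$-linearly. Since $A^{G,\mu}$ has underlying vector space $\bigoplus_g A_g$ and $(AG_\mu)^G = \bigoplus_g A_g\, g$, with $\Phi$ restricting to the obvious isomorphism $A_g \to A_g\, g$ on each summand, $\Phi$ is automatically $k$-linear and bijective; it also sends $1_A \in A_e$ to $1_{AG_\mu}$. The only thing left to verify is multiplicativity. For $a \in A_g$, $b \in A_h$ we have $a \ast_\mu b = \mu(g,h)\,ab$ with $ab \in A_{gh}$ (the grading is compatible with multiplication, as shown above), so $\Phi(a \ast_\mu b) = \mu(g,h)\,(ab)(gh)$; on the other hand, the multiplication in $AG_\mu = A \otimes kG_\mu$ being that of a tensor product of algebras, $\Phi(a)\Phi(b) = (a g)(b h) = ab\,(g \ast_\mu h) = \mu(g,h)\,(ab)(gh)$. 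The two expressions coincide, so $\Phi$ is an algebra homomorphism, and a bijective one, hence an isomorphism.

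The argument is essentially bookkeeping, and I do not expect a genuine obstacle once the invariant ring is pinned down. The points that need care are the character conventions — keeping track of the $\chi_{g^{-1}}$ (rather than $\chi_g$) in the description of the $g$-graded piece of $(AG_\mu)^G$ — and the observation that the single cocycle $\mu$ plays two roles that match up: it twists the $kG_\mu$-factor sitting inside $AG_\mu$, and this is exactly the twist applied to the multiplication of $A$ in passing to $A^{G,\mu}$. If one prefers to avoid choosing the grading-to-invariants identification by hand, an alternative is to view $AG_\mu$ as a crossed product $A \ast G$ with trivial action and cocycle $\mu$ and appeal to the standard description of its fixed ring, but the direct computation above seems shortest.
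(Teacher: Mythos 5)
Your proof is correct and is essentially the standard argument behind the cited result (the paper itself does not reprove this but defers to Bazlov--Berenstein): the invariants of the diagonal action are exactly $\bigoplus_{g} A_g \otimes g$, and the map $a \mapsto a \otimes g$ on $A_g$ matches the cocycle factor $\mu(g,h)$ coming from the twisted group algebra with the one defining $\ast_{\mu}$. Your bookkeeping with the characters ($a_g^h = \chi_{g^{-1}}(h)a_g$ identifying $A_g = A^{\chi_{g^{-1}}}$) is consistent with the conventions fixed in \S 3.1, so there is nothing to add.
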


Henceforth, our use of the term \emph{cocycle twist} and the notation $A^{G,\mu}$ will refer to either of the equivalent twists in this proposition. 

Odesskii's example from the introduction is an illustration of the invariant ring construction of a cocycle twist. The only subtlety is that the ring $M_2(\C)$ is in fact isomorphic to a twisted group algebra over the Klein four-group for a nontrivial 2-cocycle. 
%(there is only one nontrivial cohomology class of such cocycles by basic theory on Schur multipliers, see \cite[\S2]{karpilovsky1987schur}).

\subsection{Twisting the $G$-grading}\label{subsec: twistggrading}
In this section we investigate the effect of twisting a $G$-grading by a group automorphism. This idea is described in \cite[Example 3.8]{zhang1998twisted}. Given a $G$-graded algebra $A$ and $\sigma \in \text{Aut}(G)$, one can define a new grading on $A$ by $A_{\sigma}=\bigoplus_{g \in G} B_g$ where $B_g :=A_{\sigma(g)}$ for all $g \in G$. When $G$ is abelian this grading corresponds to another action of $G$ on $A$ by $k$-algebra automorphisms. 

We wish to connect this new grading with a cocycle twist. Given a cocycle $\mu$ we can define an action of $\sigma$ on $\mu$ by 
\begin{equation*}%\label{eq: autactoncocycle}
\mu^{\sigma}(g,h):=\mu(\sigma(g),\sigma(h)),
\end{equation*}
for all $g,h \in G$. It is clear that this is also a 2-cocycle. Moreover, the action of $\sigma$ preserves
2-coboundaries and so there is an action of $\text{Aut}(G)$ on the Schur multiplier of $G$.
% 
% One can let automorphisms act on gradings. If $A = \bigoplus_{g \in G} A_g$ is a a $G$-graded $k$-algebra then for $\sigma \in \text{Aut}(G)$ one can define a new $G$-grading on $A$ by $A_{\sigma} := \bigoplus_{g \in G} B_g$, where $B_g=A_{\sigma(g)}$.
\begin{lemma}\label{lem: autoncocycle}
For a 2-cocycle $\mu$ the cocycle twist $(A_{\sigma},\ast_{\mu})$ is isomorphic as a $k$-algebra to $(A,\ast_{\mu^{\left(\sigma^{-1}\right)}})$.
\end{lemma}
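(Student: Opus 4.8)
The plan is to write down an explicit $k$-linear map between the two algebras and check it is multiplicative. Observe that both $(A_\sigma,\ast_\mu)$ and $(A,\ast_{\mu^{(\sigma^{-1})}})$ have the same underlying vector space $A$; the difference lies in how the $G$-grading is indexed and which cocycle weights the products. Concretely, in $(A_\sigma,\ast_\mu)$ the component of degree $g$ is $B_g=A_{\sigma(g)}$, so if $a\in A_x$ and $b\in A_y$ in the original grading, then $a\in B_{\sigma^{-1}(x)}$, $b\in B_{\sigma^{-1}(y)}$, and hence $a\ast_\mu b=\mu(\sigma^{-1}(x),\sigma^{-1}(y))\,ab$. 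On the other hand, in $(A,\ast_{\mu^{(\sigma^{-1})}})$ we keep the original grading, so $a\ast b=\mu^{(\sigma^{-1})}(x,y)\,ab=\mu(\sigma^{-1}(x),\sigma^{-1}(y))\,ab$. These coincide, so the identity map $A\to A$ is already an isomorphism of $k$-algebras.

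First I would fix notation: let $A=\bigoplus_{g\in G}A_g$ be the given $G$-graded algebra, let $\sigma\in\mathrm{Aut}(G)$, and recall from the text that $A_\sigma=\bigoplus_{g\in G}B_g$ with $B_g=A_{\sigma(g)}$, and that $\mu^{(\sigma^{-1})}(g,h)=\mu(\sigma^{-1}(g),\sigma^{-1}(h))$. Next I would take an arbitrary pair of homogeneous elements, compute their product in each algebra using Definition 2.3 (the formula $a\ast_\mu b=\mu(g,h)ab$ on homogeneous pieces), and record that both equal $\mu(\sigma^{-1}(x),\sigma^{-1}(y))\,ab$. Then I would note that since both multiplications agree on all pairs of homogeneous elements and both are extended bilinearly, the identity map on $A$ is an algebra homomorphism; it is visibly a bijection, and it sends $1$ to $1$ since twisting by a normalised cocycle preserves the identity. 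Hence it is the desired $k$-algebra isomorphism.

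There is essentially no obstacle here: the content is purely bookkeeping about which index is being twisted and the fact that composing the relabelling $g\mapsto\sigma^{-1}(g)$ with the cocycle is the same whether one pushes it onto the grading or onto the cocycle. The one point worth stating carefully is the direction of the twist — why $\sigma$ on the grading corresponds to $\sigma^{-1}$ on the cocycle — which is exactly the computation $a\in A_x\iff a\in B_{\sigma^{-1}(x)}$ above; getting this inverse right is the only place a sign-type error could creep in, so I would display that step explicitly rather than leaving it to the reader.
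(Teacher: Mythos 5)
Your argument is correct and is essentially the paper's own proof: both reduce to the single observation that on homogeneous elements the two multiplications carry the identical scalar $\mu(\sigma^{-1}(x),\sigma^{-1}(y))$, so the identity map on the common underlying vector space is the isomorphism. The only difference is cosmetic — you index homogeneous elements by the original grading ($a\in A_x$, hence $a\in B_{\sigma^{-1}(x)}$) while the paper indexes by the twisted grading ($a\in B_g=A_{\sigma(g)}$) — and your explicit attention to the direction of the inverse is a reasonable thing to spell out.
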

\begin{proof}
% Since $\sigma$ is a group automorphism it follows that the decomposition $A_{\sigma}= \bigoplus_g B_g$ is a $G$-grading and that $\mu^{(\sigma^{-1})}$ as defined in \eqref{eq: autactoncocycle} is a 2-cocycle of $G$.
In the twist $(A_{\sigma},\ast_{\mu})$ consider homogeneous elements $a \in B_g$ and $b \in
B_h$. Under the graded structure in $(A,\ast_{\mu^{\left(\sigma^{-1}\right)}})$ one has $a \in A_{\sigma(g)}$ and $b \in
A_{\sigma(h)}$. Writing the multiplication of $a$ and $b$ in $(A_{\sigma},\ast_{\mu})$ gives
\begin{equation}\label{eq: autactoncocyclemult}
a \ast_{\mu} b = \mu(g,h)ab = \mu^{(\sigma^{-1})}(\sigma(g),\sigma(h))ab.
\end{equation}
Notice that the right-hand side of \eqref{eq: autactoncocyclemult} is precisely the multiplication $a \ast_{\mu^{\left(\sigma^{-1}\right)}} b$ in $(A,\ast_{\mu^{\left(\sigma^{-1}\right)}})$.
\end{proof}

We now examine the choice of isomorphism $G \rightarrow G^{\vee}$. Let us use the notation $(A,\phi,\mu)$ for a triple consisting of an algebra, an isomorphism $G \rightarrow G^{\vee}$, and a 2-cocycle respectively. When $G$ acts on $A$ by algebra automorphisms each such triple can be naturally associated to a cocycle twist.

\begin{proposition}\label{prop: benign}
Let $G$ act on $A$ by algebra automorphisms. Let $\phi$ and $\rho$ be isomorphisms $G \rightarrow G^{\vee}$ and $\mu$ be
a 2-cocycle. Then there exists an automorphism of $G$, $\tau$ say, such that the cocycle twists corresponding to the
triples $(A,\phi,\mu)$ and $(A,\rho,\mu^{(\tau^{-1})})$ are isomorphic as $k$-algebras.
\end{proposition}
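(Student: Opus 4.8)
The plan is to reduce this to Lemma~\ref{lem: autoncocycle} by comparing the two $G$-gradings induced on $A$ when one fixes the isomorphisms $\phi$ and $\rho$ in turn. Recall that under Hypotheses~\ref{hyp: generalcase} the $G$-grading attached to an isomorphism $\psi\colon G\to G^\vee$, $g\mapsto \chi^\psi_g$, is given by placing the $\chi^\psi_{g^{-1}}$-isotypic component of $A$ in degree $g$. So the first step is to understand the relationship between the two characters $\chi^\phi_g$ and $\chi^\rho_g$. Since $\phi$ and $\rho$ are both group isomorphisms $G\to G^\vee$, the composite $\rho^{-1}\circ\phi$ is an automorphism of $G$; call it $\sigma$. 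Then $\chi^\phi_g = \chi^\rho_{\sigma(g)}$ for all $g$, which means the $\phi$-grading in degree $g$ (the $\chi^\phi_{g^{-1}}$-isotypic component) equals the $\rho$-grading in degree $\sigma(g)$ (the $\chi^\rho_{\sigma(g)^{-1}}$-isotypic component). In the notation of \S\ref{subsec: twistggrading}, if $A$ carries the $\rho$-grading, then the $\phi$-grading is exactly the twisted grading $A_\sigma$ (possibly after replacing $\sigma$ by $\sigma^{-1}$; I would pin down the direction by a careful check of the indices).

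The second step is then immediate from Lemma~\ref{lem: autoncocycle}: the cocycle twist for the triple $(A,\phi,\mu)$ is $(A_\sigma,\ast_\mu)$ relative to the $\rho$-grading, which by that lemma is isomorphic as a $k$-algebra to $(A,\ast_{\mu^{(\sigma^{-1})}})$ relative to the $\rho$-grading — and the latter is precisely the cocycle twist associated to the triple $(A,\rho,\mu^{(\sigma^{-1})})$. Setting $\tau := \sigma = \rho^{-1}\circ\phi$ (or its inverse, depending on the index bookkeeping above) gives the statement.

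The only real obstacle is keeping the inverses and the direction of $\sigma$ straight: there are three places where an inverse creeps in — the definition $A_g := A^{\chi_{g^{-1}}}$, the definition of the twisted grading $B_g := A_{\sigma(g)}$, and the action $\mu^\sigma(g,h) := \mu(\sigma(g),\sigma(h))$ — so one must check that these compose to produce $\mu^{(\tau^{-1})}$ with $\tau$ an honest automorphism of $G$ and not, say, $\mu^{(\tau)}$ or a twist by $\tau^{-1}$ on the wrong side. I would verify the composition on homogeneous elements directly, mirroring the computation in the proof of Lemma~\ref{lem: autoncocycle}, namely expanding $a\ast_\mu b$ for $a,b$ homogeneous with respect to the $\phi$-grading and rewriting the scalar $\mu(g,h)$ in terms of the $\rho$-degrees of $a$ and $b$. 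Once that single calculation is done, the result drops out with $\tau = \rho^{-1}\circ\phi \in \operatorname{Aut}(G)$.
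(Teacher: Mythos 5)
Your proposal is correct and is essentially the paper's own argument: the paper likewise identifies the change of duality with the automorphism $\tau=\rho^{-1}\circ\phi$ (written there as $g\mapsto k_g$ with $\rho(k_g)=\phi(g)$) and then checks on homogeneous elements that the two multiplications coincide, inlining the scalar computation of Lemma~\ref{lem: autoncocycle} rather than citing it. The index bookkeeping you flag resolves the way you guessed: the inverses in $A_g=A^{\chi_{g^{-1}}}$ cancel, so the $\phi$-grading in degree $g$ is the $\rho$-grading in degree $\sigma(g)$ with $\sigma=\rho^{-1}\circ\phi$, and $\tau=\sigma$.
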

\begin{proof}
Given $\phi$, we will identify $\rho$ with an automorphism of $G$ as follows. Firstly, there exists an automorphism $\psi: G^{\vee} \rightarrow G^{\vee}$ such that $\phi = \psi \circ \rho$. Suppose we have an element $x \in A_g$, where the grading is determined under the duality given by $\phi$. This means that for all $h \in G$,
\begin{equation*}%\label{eq: dualgrading}
x^h = \phi(g)^{-1}(h)x=\psi(\rho(g))^{-1}(h)x.
\end{equation*}

Since all maps involved are isomorphisms, for all $g \in G$ there exists $k_g \in G$ such that $\psi(\rho(g))=\rho(k_g)$. We claim that the map $\tau:\; g \mapsto k_g$ defines an isomorphism of $G$. To see this, note that for all $g,h \in G$ one has
\begin{equation*}%\label{eq: }
\rho(k_{gh})=\psi(\rho(gh))=\psi(\rho(g)) \psi(\rho(h)) =\rho(k_{g})\rho(k_{h})=\rho(k_{g}k_{h}).
\end{equation*}

As $\rho$ is an isomorphism, it follows that $\tau$ is also an isomorphism as claimed. Under
the duality isomorphism $\rho$ one has $x \in A_{k_{g}}$, since 
\begin{equation}\label{eq: dualgrading1}
x^h =\psi(\rho(g))^{-1}(h)x=\rho(k_g)^{-1}(h)x,
\end{equation}
for all $h \in G$.

Suppose that $x \in A_g$ and $y \in A_h$ for some $g,h \in G$ under the duality given by $\phi$. Thus $x
\ast_{\mu} y=\mu(g,h)xy$ in $(A,\phi,\mu)$. Under the duality given by $\rho$ one has $x \in A_{k_{g}}$ by \eqref{eq:
dualgrading1}, and in a similar manner $y \in A_{k_{h}}$. Thus in $(A,\rho,\mu^{(\tau^{-1})})$ the multiplication is
\begin{equation*}%\label{eq: zhangcocylemult}
x \ast_{\mu^{\left(\tau^{-1}\right)}} y = \mu^{(\tau^{-1})}(k_g,k_h)xy=\mu(\tau^{-1}(k_g),\tau^{-1}(k_h))xy=\mu(g,h)xy. 
\end{equation*}
Since the multiplications agree on homogeneous elements, this completes the proof.
\end{proof}

\section{Preservation of properties}\label{sec: preservation}
In this section we prove that many properties are preserved by the twists defined in \S\ref{sec: construction}.
%, beginning with some basic properties in \S\ref{subsec: basicprops}. 

\subsection{Basic properties}\label{subsec: basicprops} 
\emph{Unless otherwise stated we assume that Hypotheses \ref{hyp: generalcase} hold for all results in this section.}

We first state a useful result regarding the behaviour of regular and normal elements under a cocycle twist. This result is not stated explicitly in \cite{zhang1998twisted}, although the proof is essentially contained in that of Proposition 2.2(1) op. cit.. 
\begin{lemma}\label{lem: stillregular}
Any element $a \in A$ that is homogeneous with respect to the $G$-grading is regular (normal) in $A$ if and only if it is regular (normal) in $A^{G,\mu}$.
\end{lemma}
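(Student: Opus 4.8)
The plan is to work directly with the definition of the twisted multiplication $\ast_\mu$ and to exploit the fact that, on homogeneous elements, $a \ast_\mu b$ differs from $ab$ only by a nonzero scalar $\mu(g,h) \in k^\times$. This invertibility of the twisting scalars is the whole point: multiplication by a homogeneous element before or after twisting differ only by a nonzero rescaling on each graded piece, so the two-sided ideals (annihilators, or images of left/right multiplication maps) are literally the same subspaces.

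First I would fix a homogeneous element $a \in A_g$ and recall that for any homogeneous $b \in A_h$ we have $a \ast_\mu b = \mu(g,h)\, ab$ and $b \ast_\mu a = \mu(h,g)\, ba$. For regularity: suppose $a$ is regular in $A$ and that $a \ast_\mu c = 0$ for some $c \in A^{G,\mu}$. Decompose $c = \sum_{h \in G} c_h$ into its $G$-homogeneous components; then $a \ast_\mu c = \sum_h \mu(g,h) a c_h$, and since $a c_h$ lies in the graded component indexed by $gh$ (distinct for distinct $h$), the vanishing of the sum forces $\mu(g,h) a c_h = 0$ for each $h$. As $\mu(g,h) \neq 0$ and $a$ is regular in $A$, each $c_h = 0$, hence $c = 0$; the argument for right multiplication is symmetric, and the converse direction is identical after noting that the twist of $A^{G,\mu}$ by the inverse cocycle $\mu^{-1}$ returns $A$ (alternatively, just run the same argument with the roles of $A$ and $A^{G,\mu}$ swapped, since $\mu^{-1}$ is also a cocycle and $a$ is homogeneous of the same degree).

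For normality: $a$ homogeneous of degree $g$ is normal in $A$ iff $aA = Aa$ as subspaces, and it suffices to check this on homogeneous elements by the grading. Given $b \in A_h$, normality of $a$ in $A$ gives some $b' \in A$ with $ab = b'a$; comparing $G$-degrees shows $b'$ may be taken homogeneous of degree $h$ (the degree of $b'$ must be $g h g^{-1} = h$ since $G$ is abelian, though one only needs that $ab$ is homogeneous so $b'a$ is too). Then in $A^{G,\mu}$,
\begin{equation*}
a \ast_\mu b = \mu(g,h)\, ab = \mu(g,h)\, b' a = \frac{\mu(g,h)}{\mu(h,g)}\, b' \ast_\mu a,
\end{equation*}
so $a \ast_\mu b \in A^{G,\mu} \ast_\mu a$; ranging over homogeneous $b$ gives $a \ast_\mu A^{G,\mu} \subseteq A^{G,\mu} \ast_\mu a$, and the reverse inclusion is symmetric, so $a$ is normal in $A^{G,\mu}$. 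The converse again follows by twisting back with $\mu^{-1}$.

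I do not expect a serious obstacle here — the content is entirely bookkeeping with the $G$-grading and the observation that twisting scalars are units. The one point requiring a little care is the reduction to homogeneous elements when testing regularity and normality: for regularity this is handled by the degree-decomposition argument above, and for normality one should remark explicitly that since both $aA$ (resp.\ $a \ast_\mu A^{G,\mu}$) and $Aa$ (resp.\ $A^{G,\mu} \ast_\mu a$) are $G$-graded subspaces of $A$ (using that $a$ is homogeneous), equality of these subspaces can be checked degree by degree, i.e.\ on homogeneous elements. Once that reduction is in place, everything follows from the displayed scalar identities.
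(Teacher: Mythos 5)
Your proof is correct and is essentially the standard argument the paper delegates to Zhang's Proposition 2.2(1) rather than writing out: reduce to $G$-homogeneous components, observe that left/right multiplication by a homogeneous $a$ before and after twisting differ only by the units $\mu(g,h)$ on each graded piece, and note that the relevant subspaces ($aA$, $Aa$ and their twisted analogues) are $G$-graded so everything can be checked componentwise. The only points needing care — homogenizing the element $b'$ in the normality argument and untwisting via the inverse cocycle $\mu^{-1}$ for the converse — are both handled correctly.
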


The next two lemmas (the latter from \cite{montgomery2005algebra}) will be particularly useful when working with algebras defined by generators and relations.
\begin{lemma}\label{lem: defrelns}
Let $I$ be a $G$-graded ideal of $A$. Then $I$ remains an ideal in $A^{G,\mu}$. Furthermore, a generating set for $I$ that is homogeneous with respect to the $G$-grading is also a generating set for the ideal under twisting.
\end{lemma}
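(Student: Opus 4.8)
The plan is to show two things: first, that a $G$-graded ideal $I$ of $A$ remains closed under the twisted multiplication $\ast_\mu$ (so it is an ideal of $A^{G,\mu}$), and second, that a $G$-homogeneous generating set for $I$ in $A$ generates the same underlying subspace as an ideal of $A^{G,\mu}$. Both facts hinge on the single observation that for $G$-homogeneous elements the twisted product $a \ast_\mu b$ and the old product $ab$ differ only by a nonzero scalar $\mu(g,h) \in k^\times$, so they span the same one-dimensional subspace; consequently any $k$-subspace of $A$ that is spanned by $G$-homogeneous elements and is absorbing for one multiplication is absorbing for the other.

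First I would verify $I$ is an ideal of $A^{G,\mu}$. Since $A = \bigoplus_{g \in G} A_g$ and $I$ is a $G$-graded ideal, we may write $I = \bigoplus_{g \in G}(I \cap A_g)$, and it suffices to check absorption on $G$-homogeneous elements by bilinearity of $\ast_\mu$. Take $a \in A_g$ and $x \in I \cap A_h$. Then $a \ast_\mu x = \mu(g,h)\,ax$, and $ax \in I$ because $I$ is an ideal of $(A,\cdot)$; since $\mu(g,h) \in k^\times$ and $I$ is a $k$-subspace, $a \ast_\mu x \in I$. The same argument on the other side gives $x \ast_\mu a \in I$. Extending $k$-bilinearly shows $I \ast_\mu A \subseteq I$ and $A \ast_\mu I \subseteq I$, so $I$ is a two-sided ideal of $A^{G,\mu}$; note $I$ remains a $G$-graded subspace since the underlying graded vector space is unchanged by twisting.

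Next I would handle the generating set. Let $S \subseteq I$ be a $G$-homogeneous set generating $I$ as an ideal of $A$; let $I'$ denote the ideal of $A^{G,\mu}$ generated by $S$. I want $I' = I$ as subsets of $A$. The inclusion $I' \subseteq I$ is immediate since $S \subseteq I$ and $I$ is an ideal of $A^{G,\mu}$ by the previous paragraph. For the reverse, a typical element of $I$ is a $k$-linear combination of products $a s b$ with $a, b \in A$ and $s \in S$; by decomposing $a$ and $b$ into their $G$-homogeneous components we reduce to the case $a \in A_{g}$, $b \in A_{h}$, $s \in A_{l}$ (the last since $S$ is $G$-homogeneous). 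Then a direct comparison shows $a \ast_\mu s \ast_\mu b = \mu(g,l)\mu(gl,h)\, a s b$, and by the cocycle identity \eqref{eq: cocycleid} the scalar $\mu(g,l)\mu(gl,h) = \mu(g,lh)\mu(l,h)$ is a unit; hence $asb$ is a $k^\times$-multiple of $a \ast_\mu s \ast_\mu b \in I'$, so $asb \in I'$. Taking $k$-linear combinations gives $I \subseteq I'$, completing the proof.

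I do not anticipate a serious obstacle: the only point requiring any care is bookkeeping with the cocycle — one must use associativity of $\ast_\mu$ (guaranteed by the cocycle condition) and track the explicit scalar in the triple product to confirm it is invertible, which is exactly what \eqref{eq: cocycleid} delivers. Everything else is formal manipulation using that $\mu$ takes values in $k^\times$ and that $I$ is a $k$-subspace spanned by $G$-homogeneous elements.
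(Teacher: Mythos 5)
Your proof is correct and follows essentially the same route as the paper's: both rest on the single observation that twisted and untwisted products of $G$-homogeneous elements differ only by a unit scalar $\mu(\,\cdot\,,\cdot\,)\in k^{\times}$. The only differences are cosmetic --- the paper cites Montgomery for the first assertion and writes out the one-sided products $fa$ and $af$ for a single homogeneous generator, whereas you verify the ideal property directly and track the scalar $\mu(g,l)\mu(gl,h)$ in the full triple product $a\ast_{\mu}s\ast_{\mu}b$; your version is, if anything, slightly more self-contained.
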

\begin{proof}
That $I$ is still an ideal in the twist is proved in \cite[Proposition 3.1(2)]{montgomery2005algebra}. To complete the proof it suffices to deal with the case that $I=(f)$ for some homogeneous element $f \in A_g$. Suppose that $a \in A$ with homogeneous decomposition $a = \sum_{h \in G} a_h$. One has
\begin{equation*}
fa = f \ast_{\mu} \left(\sum_{h \in G} \frac{a_h}{\mu(g,h)} \right)\; \text{ and }\; af = \left(\sum_{h \in G} \frac{a_h}{\mu(h,g)} \right) \ast_{\mu} f,
\end{equation*}
which proves the result.
\end{proof}
\begin{lemma}[{\cite[ Proposition 3.1(1)]{montgomery2005algebra}}]\label{lemma: finitelygenerated}
$A$ is finitely generated  as a $k$-algebra if and only if $A^{G,\mu}$ is also finitely generated. Furthermore, if Hypotheses \ref{hyp: gradedcase} hold then $A$ is finitely generated in degree 1 if and only if $A^{G,\mu}$ is too.
\end{lemma}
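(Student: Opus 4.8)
The plan is to reduce both claims to the structural fact, already exploited in Lemma~\ref{lem: defrelns}, that the underlying $k$-vector space of $A^{G,\mu}$ is literally $A$, with a multiplication that differs from the original one only by nonzero scalars on homogeneous components; in particular the identity and the $G$-graded pieces are unchanged, and a subspace spanned by $G$-homogeneous elements is closed under $\ast_{\mu}$ if and only if it is closed under the original product. The statement being symmetric under twisting by $\mu$ versus $\mu^{-1}$ (which recovers $A$ from $A^{G,\mu}$, since $(A^{G,\mu})^{G,\mu^{-1}} \cong A$), it suffices to prove one implication of each biconditional.

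For the first assertion, suppose $A$ is generated as a $k$-algebra by a finite set $S = \{s_1,\dots,s_n\}$. Decomposing each $s_i = \sum_{g \in G} (s_i)_g$ into its $G$-homogeneous components and noting $G$ is finite, we may enlarge $S$ to a finite set $S'$ of $G$-homogeneous generators of $A$ as a $k$-algebra; the span of all products of elements of $S'$ (in the original multiplication) is all of $A$. Since each such product of $G$-homogeneous elements equals, by definition of $\ast_{\mu}$, a nonzero scalar times the corresponding $\ast_{\mu}$-product of the same elements, the $k$-subalgebra of $A^{G,\mu}$ generated by $S'$ is again all of $A = A^{G,\mu}$. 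Hence $A^{G,\mu}$ is finitely generated, and the converse follows by applying this to $A^{G,\mu}$ with the cocycle $\mu^{-1}$.

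For the second assertion, assume Hypotheses~\ref{hyp: gradedcase}, so the $\N$-grading and $G$-grading are compatible and each graded piece $A_n$ is a $kG$-module splitting into $G$-homogeneous components. If $A$ is generated in degree~$1$, then $A_1$ spans $A$ under the original multiplication; choosing a $G$-homogeneous $k$-basis of the finite-dimensional space $A_1$ and arguing exactly as above, the same basis generates $A^{G,\mu}$ under $\ast_{\mu}$, and $A_1$ is still the degree-$1$ part of $A^{G,\mu}$ because the $\N$-grading is unaffected by the twist (the cocycle only rescales within fixed bidegrees). Thus $A^{G,\mu}$ is generated in degree~$1$, and again the converse is obtained by twisting back.

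The only point requiring mild care — and the closest thing to an obstacle — is the passage from an arbitrary finite generating set to a $G$-homogeneous one: one must check that replacing generators by their homogeneous components does not destroy the generation property, which is immediate since each original generator lies in the span of its components, and that finitely many components suffice, which holds as $|G| < \infty$. With that in hand everything else is a formal consequence of $A^{G,\mu}$ and $A$ sharing the same underlying space and the same $\N$- and $G$-gradings.
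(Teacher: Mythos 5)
Your proof is correct and follows essentially the same route as the paper: pass to a finite $G$-homogeneous (equivalently, $G$-stable) generating set, observe that the twist only rescales products of homogeneous elements by nonzero scalars so the same set generates $A^{G,\mu}$, and untwist by $\mu^{-1}$ for the converses. The paper simply defers the first assertion to Montgomery's Proposition 3.1(1), whose argument is exactly the one you spell out, and derives the degree-one statement from it in the same way you do.
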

\begin{proof}
The first part of the statement is proved by Montgomery. By consulting the proof in \cite{montgomery2005algebra}, one can see that a generating set for $A^{G,\mu}$ can be obtained as follows: take a generating set of $A$ and find a vector space $V$ which contains this generating set and is preserved by the action of $G$. Then $A^{G,\mu}$ will be generated by $V$ under the new multiplication on the shared underlying vector space. 

One may therefore conclude that under the additional hypotheses of the second statement of this lemma, the property of being finitely generated in degree 1 is preserved.
\end{proof}

We now show that gradings are sometimes preserved under cocycle twists.
\begin{lemma}\label{lem: autpresgrad}
Suppose that $A$ has a $H$-grading for some arbitrary group $H$ and that a finite abelian group $G$ acts on $A$ by $H$-graded algebra automorphisms. Then any cocycle twist $A^{G,\mu}$ will inherit the $H$-grading from $A$.
\end{lemma}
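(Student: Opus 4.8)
The plan is to show that the given $H$-grading on $A$ is compatible with the $G$-grading used to define the twist — so that $A$ is in fact $(G\times H)$-graded — and then to observe that passing to $\ast_\mu$ rescales only the $G$-homogeneous products and hence leaves the $H$-grading intact.

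First I would write $A = \bigoplus_{h \in H} C_h$ for the given $H$-graded decomposition. Since $G$ acts on $A$ by $H$-graded algebra automorphisms, each $C_h$ is a $kG$-submodule of $A$. Because $\mathrm{char}(k) \nmid |G|$ and $G$ is finite abelian over the algebraically closed field $k$, Maschke's theorem gives that every $kG$-module is the direct sum of its one-dimensional isotypic components; applying this to $C_h$ yields $C_h = \bigoplus_{g \in G} (C_h \cap A_g)$, where $A_g = A^{\chi_{g^{-1}}}$ is the isotypic component used to define the $G$-grading in \S\ref{sec: construction}. Hence $A = \bigoplus_{(g,h) \in G \times H} A_{g,h}$ with $A_{g,h} := A_g \cap C_h$, and this is a genuine $(G\times H)$-grading for the original multiplication, i.e. $A_{g_1,h_1}A_{g_2,h_2} \subseteq A_{g_1 g_2,\, h_1 h_2}$.

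Next I would check directly that $\ast_\mu$ respects the $H$-grading. Given $a \in C_{h_1}$ and $b \in C_{h_2}$, decompose them into $G$-homogeneous parts $a = \sum_{g_1} a_{g_1}$ and $b = \sum_{g_2} b_{g_2}$ with $a_{g_1} \in A_{g_1,h_1}$ and $b_{g_2} \in A_{g_2,h_2}$. Then, using Definition \ref{defn: fullcocycletwist},
\[
a \ast_\mu b = \sum_{g_1,g_2} \mu(g_1,g_2)\, a_{g_1} b_{g_2},
\]
and each $a_{g_1} b_{g_2}$ already lies in $C_{h_1 h_2}$ because $A = \bigoplus_h C_h$ is a grading for the old multiplication; scaling by $\mu(g_1,g_2) \in k^\times$ does not change this. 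Therefore $a \ast_\mu b \in C_{h_1 h_2}$, and since $A^{G,\mu}$ has the same underlying graded vector space as $A$ (so in particular $1 \in C_{e_H}$), we conclude that $A^{G,\mu} = \bigoplus_{h \in H} C_h$ is an $H$-graded algebra.

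There is essentially no hard step here: the only point that requires care is the compatibility of the two gradings, namely that the $G$-isotypic decomposition of $A$ can be performed inside each $H$-homogeneous piece $C_h$ — this is exactly Maschke semisimplicity applied to the submodule $C_h$, using that the $G$-action is $H$-graded. Once that is in place, the verification for $\ast_\mu$ is immediate because the cocycle $\mu$ sees only the $G$-grading.
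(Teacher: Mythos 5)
Your proof is correct and follows essentially the same route as the paper: both arguments apply Maschke's theorem inside each $H$-homogeneous component $C_h$ (using that the $G$-action is $H$-graded) to reduce to elements homogeneous for both gradings, after which the cocycle merely rescales each product $a_{g_1}b_{g_2}$, which already lies in $C_{h_1h_2}$. Your version just makes the resulting $(G\times H)$-bigrading explicit, which the paper leaves implicit.
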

\begin{proof}
We must show that for all $h_1,h_2 \in H$ and homogeneous elements $x \in A_{h_{1}}$ and $y \in A_{h_{2}}$ one has $x \ast_{\mu} y \in A_{h_{1}h_{2}}$, since then $A^{G,\mu}_{h_{1}} \cdot A^{G,\mu}_{h_{2}} \subseteq A^{G,\mu}_{h_{1}h_{2}}$. As $G$ acts on $A$ by $H$-graded algebra automorphisms, one can apply Maschke's theorem to the $H$-graded components of $A$ under the action of $G$. This allows us to further assume that $x$ and $y$ are homogeneous with respect to the $G$-grading, thus $x \in A_{g_{1}}$ and $y \in A_{g_{2}}$ for some $g_1,g_2 \in G$. Then
\begin{equation*}%\label{eq: preservegradingaut}
x \ast_{\mu} y = \mu(g_1,g_2)xy \in A_{h_{1}h_{2}},
\end{equation*}
which completes the proof.
\end{proof}
\begin{rem}
In particular, Lemma \ref{lem: autpresgrad} implies that $A^{G,\mu}$ inherits the $G$-grading from $A$. 
\end{rem}

Before stating our next result, we recall the concept of twisting a module by an automorphism. Let $A$ be a $k$-algebra and $\phi$ be a $k$-algebra automorphism. For a right $A$-module $M$, one can define a new right $A$-module $M^{\phi}$ via the multiplication $m \ast_{\phi} a=m\phi(a)$ for all $a \in A$, $m \in M$. One can twist both sides of an $(A,A)$-bimodule in this manner simultaneously; in particular, if such an $(A,A)$-bimodule is free on each side and the same generator can be used in each module structure, then one may assume that the bimodule is untwisted on one side (see \cite[\S2.3]{brown2008dualising}).
\begin{lemma}\label{lem: fflat}
As an $(A^{G,\mu},A^{G,\mu})$-bimodule there is a decomposition
\begin{equation*}%\label{eq: decompbimod}
AG_{\mu} \cong \bigoplus_{g \in G} {^{\text{id}}(A^{G,\mu})^{\phi_{g}}},
\end{equation*}
for some automorphisms $\phi_{g}$ of $A^{G,\mu}$, with $\phi_e=\text{id}$. Each summand is free of rank 1 as a left and right $A^{G,\mu}$-module. Consequently, $AG_{\mu}$ is a faithfully flat extension of $A^{G,\mu}$ on both the left and the right. 

Similarly, $_A(AG_{\mu})$ and $(AG_{\mu})_A$ are free modules of finite rank, thus $AG_{\mu}$ is a faithfully flat extension of $A$ on both the left and the right. 
\end{lemma}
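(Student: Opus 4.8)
The plan is to exploit the crossed-product description of $AG_{\mu}$ recorded at the end of \S\ref{sec: background}, combined with the identification $A^{G,\mu} \cong (AG_{\mu})^G$ from Proposition \ref{prop: twoconstrequal}. The key structural observation is that $AG_{\mu} = \bigoplus_{g \in G} A\,g = \bigoplus_{g \in G} g\,A$ as a left and right $A$-module, and that each rank-one piece $Ag$ is in fact a rank-one free module over the fixed ring $A^{G,\mu}$ as well. To see the latter, I would first pin down a convenient set of homogeneous generators: picking for each $g \in G$ a nonzero element $u_g$ of the isotypic component $(kG_{\mu})^{\chi_g} = kg$ (so $u_g$ is a scalar multiple of $g$) gives elements of $AG_{\mu}$ that are homogeneous for the induced $G$-grading, hence regular in $AG_{\mu}$ by the observation in Lemma \ref{lem: stillregular}, and moreover normal since $g\,a = a^{g^{-1}}\,g$ (up to the cocycle scalar) in the crossed product. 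Conjugation by $u_g$ therefore induces a graded automorphism $\phi_g$ of $AG_{\mu}$ that restricts to an automorphism of the fixed subring $A^{G,\mu}$; one sets $\phi_e = \mathrm{id}$ by normalising $u_e = 1$.

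Next I would verify the bimodule decomposition. As a vector space $AG_{\mu} = \bigoplus_{g \in G} A\,u_g$, and I claim each summand $A\,u_g$ equals $A^{G,\mu}\,u_g = u_g\,A^{G,\mu}$ once we pass to the $\ast_{\mu}$-multiplication; this is a counting/averaging argument using that $G$ is abelian of order coprime to $\mathrm{char}(k)$, so that the isotypic decomposition of $A$ under $G$ is exactly the $G$-grading and $A = \bigoplus_h A_h$ with each $A_h = A_h^{G,\mu}$ as a set. The left $A^{G,\mu}$-action on $A\,u_g$ is the untwisted one (generator $u_g$), while the right action picks up the automorphism $\phi_g$ coming from moving $u_g$ past elements of $A^{G,\mu}$; this is precisely the twisted-bimodule ${}^{\mathrm{id}}(A^{G,\mu})^{\phi_g}$, and it is visibly free of rank $1$ on each side with the single generator $u_g$, so the remark recalled before the lemma about untwisting one side applies. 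Summing over $g$ gives the displayed decomposition.

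The faithful flatness statements then follow formally. A finite direct sum of rank-one free modules is free of finite rank, hence projective, hence flat; and a nonzero free module over a ring with $1$ is faithful, so a free module of positive rank is faithfully flat. Applying this to $AG_{\mu}$ as a left (resp. right) $A^{G,\mu}$-module via the decomposition just established, and separately to $AG_{\mu} = \bigoplus_{g} A\,g$ as a left (resp. right) $A$-module via the crossed-product structure, yields both faithful-flatness claims.

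I expect the main obstacle to be the careful bookkeeping in the second step: one must check that $A\,u_g$ really is generated by $u_g$ over $A^{G,\mu}$ on both sides, which amounts to showing that left multiplication by elements of $A$ and $\ast_{\mu}$-multiplication by elements of $A^{G,\mu}$ sweep out the same subspace $A\,u_g$, and then to identify the right-module structure as a clean twist by a single automorphism $\phi_g$ rather than something that mixes the grading components. Tracking the cocycle scalars $\mu(g,h)$ through the identities $u_g * a = \mu(g, h) a^{?} u_g$ for $a \in A_h$, and confirming that the resulting map on $A^{G,\mu}$ is multiplicative for $\ast_{\mu}$, is where the computation has to be done honestly; everything after that is soft homological algebra.
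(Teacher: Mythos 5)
Your overall architecture (generators $u_g = 1\otimes g$, regularity via Lemma \ref{lem: stillregular}, conjugation by the unit $1\otimes g$ producing the twisting automorphism $\phi_g$, and the soft faithful-flatness endgame) is close to the paper's, and the $A$-module half of the statement is handled correctly via the crossed-product decomposition $AG_{\mu}=\bigoplus_g A\otimes g$. But there is a genuine error at the heart of the bimodule step: the claimed equality $A\,u_g = A^{G,\mu}\,u_g$ is false. Writing $A^{G,\mu}=\bigoplus_h A_h\otimes h$ inside $AG_{\mu}$, one computes $A^{G,\mu}(1\otimes g)=\bigoplus_h A_h\otimes hg$, whereas $A\,u_g = A\otimes g=\bigoplus_h A_h\otimes g$; these intersect only in $A_e\otimes g$. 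In particular $A\otimes g$ is not even an $A^{G,\mu}$-submodule: for $b\otimes h\in A^{G,\mu}$ with $h\neq e$ one has $(b\otimes h)(a\otimes g)\in A\otimes hg$. No averaging argument using ``$A_h=A_h^{G,\mu}$ as a set'' can repair this, because the two decompositions $\bigoplus_g A\otimes g$ and $\bigoplus_g A^{G,\mu}(1\otimes g)$ of the same space are genuinely transverse to one another.

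The fix --- and this is what the paper does --- is to take as summands the isotypic components $M^{\chi_{g}}=\bigoplus_h A_{g^{-1}h}\otimes h$ of $AG_{\mu}$ under the diagonal $G$-action, rather than the crossed-product pieces. One checks $M^{\chi_{e}}=A^{G,\mu}$ and $M^{\chi_{g}}M^{\chi_{h}}=M^{\chi_{gh}}$, and that every $a\otimes h$ with $a\in A_{g^{-1}h}$ is both a left and a right $A^{G,\mu}$-multiple of $1\otimes g$ after dividing by the appropriate cocycle value; regularity of $1\otimes g$ then gives freeness of rank one on each side, and summing over $g$ gives the displayed bimodule decomposition. Your idea of realising $\phi_g$ as conjugation by the unit $1\otimes g$ is a legitimate shortcut to seeing that it is an algebra automorphism of $A^{G,\mu}$ (the paper instead verifies multiplicativity directly from the cocycle identity), but note that since $AG_{\mu}=A\otimes kG_{\mu}$ carries the trivial $G$-action in its crossed-product structure, the correct formula is $\phi_g(a\otimes h)=\frac{\mu(h,g)}{\mu(g,h)}(a\otimes h)$; no twist $a\mapsto a^{g^{-1}}$ appears. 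With the summands corrected and this formula in hand, the remainder of your argument goes through.
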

\begin{proof}
We will proceed as in the proof of the main theorem of \cite{smith1989can}. Let $AG_{\mu}=\bigoplus_{g \in G}
M^{\chi_{g}}$ be the isotypic decomposition of $AG_{\mu}$ under the action
of $G$. Observe that $A^{G,\mu}=M^{\chi_{e}}$ and $M^{\chi_{g}}M^{\chi_{h}}=M^{\chi_{gh}}$ for all $g,h \in G$, since $G$ acts by algebra automorphisms. This means that each isotypic component $M^{\chi_{g}}$ has an $(A^{G,\mu},A^{G,\mu})$-bimodule
structure. 

The isotypic component $M^{\chi_{g}}$ contains the element $1 \otimes g$. An arbitrary element in this component has the form $a \otimes h$ for some $a \in A_{g^{-1}h}=A^{\chi_{gh^{-1}}}$. Thus $a \otimes g^{-1}h \in A^{G,\mu}$ and therefore
\begin{equation*}%\label{eq: freedecomp}
a \otimes h= \left(\frac{a \otimes g^{-1}h}{\mu(g^{-1}h,g)}\right) \cdot (1 \otimes g)=(1 \otimes g) \cdot \left(\frac{a \otimes g^{-1}h}{\mu(g,g^{-1}h)}\right).
\end{equation*}
Consequently, $M^{\chi_{g}}$ is cyclic as a left or a right $A^{G,\mu}$-module. Note that $1 \otimes g$ is regular in $AG$, therefore by Lemma \ref{lem: stillregular} it is also regular in $AG_{\mu}$. This proves that $M^{\chi_{g}}$ is a free $A^{G,\mu}$-module of rank 1 on both the left and the right.

By the discussion prior to the statement of the proposition, we know that the bimodule generated by $1 \otimes g$ is isomorphic to ${^{\text{id}}}(A^{G,\mu})^{\phi_{g}}$ for some algebra automorphism $\phi_{g}$. To describe $\phi_g$ it suffices to look at the left action of a homogeneous element in $A^{G,\mu}$ on $1 \otimes g$, which can be taken to be a free generator for the left $A^{G,\mu}$-module structure. Consider a homogeneous element $a \otimes h \in A^{G,\mu}_h$. One has
\begin{equation*}%\label{eq: twistbimod}
(a \otimes h) \cdot (1 \otimes g) = \mu(h,g) a \otimes hg = (1 \otimes g) \cdot  \frac{\mu(h,g)}{\mu(g,h)}(a \otimes h).
\end{equation*}

Define a map $\phi_g: A^{G,\mu} \rightarrow A^{G,\mu}$ by $a \otimes h \mapsto \frac{\mu(h,g)}{\mu(g,h)}(a \otimes h)$ on homogeneous elements and extending $k$-linearly. To see that this is a $G$-graded automorphism, consider homogeneous elements $a \otimes h \in A^{G,\mu}_h$ and $b \otimes l \in A^{G,\mu}_l$. Then
\begin{equation}\label{eq: phighom}
\phi_g(a \otimes h)\phi_g(b \otimes l) = \frac{\mu(h,g)\mu(l,g)\mu(h,l)}{\mu(g,h)\mu(g,l)}(ab\otimes hl).
\end{equation}

On the other hand, one can use \eqref{eq: cocycleid} to see that
\begin{gather}
\begin{aligned}\label{eq: phighom1}
\phi_g(\mu(h,l)(ab\otimes hl)) &= \frac{\mu(hl,g)\mu(h,l)}{\mu(g,hl)} (ab\otimes hl) 
\\ &= \frac{\mu(h,lg)\mu(l,g)\mu(h,l)}{\mu(g,h)\mu(gh,l)} (ab\otimes hl).
\end{aligned}
\end{gather}
Observe that $\frac{\mu(h,lg)}{\mu(gh,l)} = \frac{\mu(h,g)}{\mu(g,l)}$, which follows from $G$ being abelian together with another use of \eqref{eq: cocycleid}. Substituting this expression into \eqref{eq: phighom1} produces the expression in \eqref{eq: phighom}. It is clear that $\phi_g$ is injective, therefore it must be a $G$-graded automorphism of $A^{G,\mu}$ as claimed.

The result is trivial for $A$ by the definition of $AG_{\mu}$.
\end{proof}

The previous result allows us to begin proving that various properties are preserved under twisting, beginning with GK dimension. Part (ii) of the following lemma is implicit in \cite[pg. 89-90]{odesskii2002elliptic}.
\begin{lemma}\label{lem: hilbseries}
The following statements are true:
\begin{itemize}
 \item[(i)] $\text{GKdim }A= \text{GKdim }A^{G,\mu}$;
 \item[(ii)] Under Hypotheses \ref{hyp: gradedcase} one has $H_A(t)=H_{A^{G,\mu}}(t)$. In particular, if $A$ is connected graded then so is $A^{G,\mu}$.
\end{itemize}
\end{lemma}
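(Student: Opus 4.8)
The plan is to deduce both statements from the bimodule decomposition of Lemma \ref{lem: fflat}. For (ii), observe first that under Hypotheses \ref{hyp: gradedcase} the group $G$ acts by $\N$-graded automorphisms, so by Lemma \ref{lem: autpresgrad} the twist $A^{G,\mu}$ inherits the $\N$-grading from $A$, and this grading is on the \emph{same underlying vector space} with the same graded pieces $A_n$. Since $\dim_k (A^{G,\mu})_n = \dim_k A_n$ for every $n$ by construction of the twisted multiplication (which only rescales products, never changes the underlying space), we get $H_{A^{G,\mu}}(t) = H_A(t)$ immediately. The ``in particular'' clause is then trivial: connected graded means $A_0 = k$ and $\dim_k A_n < \infty$, and both conditions are visibly preserved since the degree-$0$ piece and all dimensions are unchanged. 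So (ii) is essentially a bookkeeping observation and requires no real work beyond invoking Lemma \ref{lem: autpresgrad}.

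For (i), the idea is to sandwich $AG_\mu$ between $A^{G,\mu}$ (and between $A$) using faithful flatness. By Lemma \ref{lem: fflat}, $AG_\mu \cong \bigoplus_{g \in G} {}^{\text{id}}(A^{G,\mu})^{\phi_g}$ as an $(A^{G,\mu}, A^{G,\mu})$-bimodule, so as a left (or right) $A^{G,\mu}$-module, $AG_\mu$ is free of rank $|G|$. Twisting a module by an automorphism does not change its GK dimension as a module, and a finite direct sum has the GK dimension of its summands, so $\text{GKdim}\,(AG_\mu)_{A^{G,\mu}} = \text{GKdim}\,A^{G,\mu}$ viewing $AG_\mu$ as an $A^{G,\mu}$-module. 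But $AG_\mu$ is also a ring containing $A^{G,\mu}$ as a subring, and it is a finite (free) module over $A^{G,\mu}$ on each side; a standard fact is that for such a finite ring extension $R \subseteq S$ with $S$ finitely generated as an $R$-module, $\text{GKdim}\,S = \text{GKdim}\,R$ (the extension is ``GK-dimension-preserving''). Applying this once with $R = A^{G,\mu}$ and once with $R = A$ — both legitimate since Lemma \ref{lem: fflat} shows $AG_\mu$ is free of finite rank over each — yields $\text{GKdim}\,A = \text{GKdim}\,(AG_\mu) = \text{GKdim}\,A^{G,\mu}$.

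The only point requiring care is the precise citation or argument for ``a ring $S$ that is finitely generated as a module over a subring $R$ has $\text{GKdim}\,S = \text{GKdim}\,R$''. I would invoke \cite[Proposition 5.5 or Corollary 5.6]{krause2000growth} (or the corresponding statement in McConnell--Robson), possibly after noting that the relevant inequality $\text{GKdim}\,R \le \text{GKdim}\,S$ is clear from $R \hookrightarrow S$, and the reverse inequality $\text{GKdim}\,S \le \text{GKdim}\,R$ follows because $S = \sum_{i=1}^{n} R x_i$ for finitely many $x_i$, so a finite-dimensional generating subspace of $S$ is contained in $V \cdot \{x_1, \dots, x_n\}$ for a finite-dimensional generating subspace $V$ of $R$, giving $\dim S^{(m)} \le n \cdot \dim R^{(m+c)}$ for some constant $c$. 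This is the main (and only) obstacle, and it is entirely standard; everything else is a direct application of Lemmas \ref{lem: autpresgrad} and \ref{lem: fflat}.
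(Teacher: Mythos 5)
Your proposal is correct and matches the paper's argument essentially verbatim: part (ii) follows from Lemma \ref{lem: autpresgrad} since the underlying graded vector space is unchanged, and part (i) follows by applying \cite[Proposition 5.5]{krause2000growth} twice to the finite module extensions $A \subset AG_{\mu}$ and $A^{G,\mu} \subset AG_{\mu}$ furnished by Lemma \ref{lem: fflat}. The extra sketch you give of why finite module extensions preserve GK dimension is a correct unpacking of that cited result but is not needed beyond the citation.
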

\begin{proof}
By Lemma \ref{lem: fflat}, $AG_{\mu}$ is a f.g.\ module over $A$ and $A^{G,\mu}$ on both sides. Applying \cite[Proposition 5.5]{krause2000growth} twice, first to $A \subset AG_{\mu}$, then to $A^{G,\mu} \subset AG_{\mu}$, proves part (i) of the lemma.

Now assume that Hypotheses \ref{hyp: gradedcase} hold. By Lemma \ref{lem: autpresgrad}, $A^{G,\mu}$ possesses the same $\N$-graded structure as $A$, thus the dimensions of the graded components are the same for both.
\end{proof}

We now turn to the strongly noetherian property which, as demonstrated by results in \cite{rogalski2008canonical}, has strong geometric consequences. 
\begin{defn}[{\cite[cf. \S 4]{artin1999generic}}] \label{defn: strongnoeth}
Let $A$ be a noetherian $k$-algebra. $A$ is \emph{strongly (right) noetherian} if for all commutative noetherian $k$-algebras $R$, $A \otimes R$ is (right) noetherian. 
\end{defn}

Our result is a partial generalisation of \cite[Proposition 3.1(3)]{montgomery2005algebra} which was concerned with the noetherian condition.
\begin{proposition}\label{prop: uninoeth}
$A$ is strongly noetherian if and only if $A^{G,\mu}$ is.
\end{proposition}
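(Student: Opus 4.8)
The plan is to exploit the faithfully flat extension from Lemma~\ref{lem: fflat}, reducing the strongly noetherian property of $A^{G,\mu}$ to that of $A$ (and vice versa) by passing through the intermediate ring $AG_{\mu}$. The key observation is that for any commutative noetherian $k$-algebra $R$, one can tensor the bimodule decomposition of Lemma~\ref{lem: fflat} with $R$ over $k$, obtaining that $AG_{\mu} \otimes R$ is a finitely generated free module on both sides over both $A^{G,\mu} \otimes R$ and $A \otimes R$. Finite generation of a ring extension transfers the (right) noetherian property downward when the extension is faithfully flat, and upward trivially; this is the standard mechanism (as in \cite{montgomery2005algebra}) that I will apply in the base-changed setting.

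First I would record that $AG_{\mu} \cong A \otimes kG_{\mu}$ as $k$-algebras, so that $AG_{\mu} \otimes R \cong A \otimes (kG_{\mu} \otimes R)$. If $A$ is strongly noetherian, then $A \otimes (kG_{\mu} \otimes R)$ is noetherian: indeed $kG_{\mu} \otimes R$ is a finitely generated module over the commutative noetherian ring $R$, hence noetherian, and one checks $A \otimes S$ is noetherian for every noetherian $k$-algebra $S$ that is module-finite over a commutative noetherian ring --- this follows because $A \otimes S$ is a finitely generated module over $A \otimes Z$ where $Z \subseteq S$ is a commutative noetherian subalgebra over which $S$ is finite, and $A \otimes Z$ is noetherian by the strongly noetherian hypothesis. (Alternatively, and more cleanly, one invokes the fact that the strongly noetherian property is preserved under passing to finite module extensions on either side, which is itself a consequence of the base-changed faithful flatness argument; I would cite \cite{artin1999generic} for the relevant permanence properties.) Having established that $AG_{\mu} \otimes R$ is noetherian, the inclusion $A^{G,\mu} \otimes R \hookrightarrow AG_{\mu} \otimes R$ is faithfully flat with $AG_{\mu} \otimes R$ finitely generated as a module over $A^{G,\mu} \otimes R$ on each side, so $A^{G,\mu} \otimes R$ is noetherian as well; since $R$ was arbitrary, $A^{G,\mu}$ is strongly noetherian.

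For the converse I would run the same argument with the roles of $A$ and $A^{G,\mu}$ interchanged, using that $AG_{\mu}$ is simultaneously a faithfully flat finite extension of $A^{G,\mu}$ (by Lemma~\ref{lem: fflat}) and that $A \subseteq AG_{\mu}$ is likewise faithfully flat and finite on both sides. Concretely: if $A^{G,\mu}$ is strongly noetherian, then by the permanence property $AG_{\mu}$ is strongly noetherian (being a finite extension), hence $AG_{\mu} \otimes R$ is noetherian for all commutative noetherian $R$; then faithful flatness of $A \otimes R \hookrightarrow AG_{\mu} \otimes R$ together with finiteness forces $A \otimes R$ noetherian. A symmetric remark handles left/right, or one notes the whole setup is left-right symmetric by Lemma~\ref{lem: fflat}.

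The main obstacle is the lemma that the strongly noetherian property is inherited by a ring $B$ from a faithfully flat extension $B \subseteq C$ that is module-finite over $B$, and conversely passes up to such a $C$ from $B$ --- i.e. that ``strongly noetherian'' behaves well under finite faithfully flat extensions. This is essentially the content of applying \cite[Proposition~3.1(3)]{montgomery2005algebra} (or rather the faithful-flatness argument underlying it) after the base change $-\otimes R$, and the one point requiring care is that tensoring the bimodule isomorphism of Lemma~\ref{lem: fflat} with $R$ over $k$ genuinely preserves both the freeness and the faithful flatness --- which it does, since $-\otimes_k R$ is exact and takes free modules to free modules. Everything else is a routine unwinding of definitions, so I would keep the write-up short, emphasising the base-change step and citing the noetherian-descent-along-faithfully-flat-finite-extensions fact.
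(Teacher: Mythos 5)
Your proposal is correct and follows essentially the same route as the paper: both reduce to the intermediate ring $AG_{\mu}$ via Lemma~\ref{lem: fflat} and then invoke the two permanence facts from \cite{artin1999generic} (ascent of the strongly noetherian property along module-finite extensions, descent along faithfully flat ones), which your base-change-by-$R$ discussion merely unwinds by hand. The only cosmetic difference is in the converse: you exploit that $AG_{\mu}$ is simultaneously finite over $A^{G,\mu}$ and faithfully flat over $A$, whereas the paper reruns the forward argument with the untwisting algebra $A^{G,\mu}G_{\mu^{-1}}$; both are valid.
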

\begin{proof}
We prove that $A^{G,\mu}$ is strongly right noetherian. Assume that $A$ is strongly noetherian. Then
$AG_{\mu}$ is a f.g.\ right $A$-module by the proof of Lemma \ref{lem: fflat}, hence by \cite[Proposition 4.1(1a)]{artin1999generic} $AG_{\mu}$ is strongly right noetherian. Using Lemma \ref{lem: fflat} again, the extension $A^{G,\mu} \subset AG_{\mu}$ is faithfully flat on the right, therefore we can apply \cite[Proposition 4.1(2a)]{artin1999generic} to show that $A^{G, \mu}$ is strongly right noetherian.

In the other direction we can use the same argument but with $AG_{\mu}$ replaced with $A^{G,\mu}G_{\mu^{-1}}$; Lemma \ref{lem: fflat} tells us that $A^{G,\mu}G_{\mu^{-1}}$ is a right faithfully flat extension of both $A^{G,\mu}$ and of $A \cong (A^{G,\mu}G_{\mu^{-1}})^G$.

An identical proof works for the left-sided part of the result.
\end{proof}

\subsection{AS-regularity}\label{subsec: asregular}
The aim of this section is to prove that being AS-regular is preserved under cocycle twists. Although this property only concerns c.g.\ algebras, we will prove that certain components of the definition are preserved without any assumption on graded structure.

%The definition of an AS-regular algebra is as follows.
\begin{defn}[{\cite{artin1987graded}}]\label{defn: asregular}
Let $d \in \mathbb{N}$. A c.g.\ $k$-algebra $A$ is said to be \emph{AS-regular of dimension $d$} if the following conditions are satisfied:
\begin{itemize}
 \item[(i)] $A$ has finite GK dimension;
 \item[(ii)] $\text{gldim }A=d$;
 \item[(iii)] $A$ is AS-Gorenstein, thus $\text{Ext}^i_A(k,A)= k \delta_{i,d}$ when $k$ and $A$ are considered as left or as right $\N$-graded $A$-modules.
\end{itemize}
\end{defn}
One can calculate the Ext group in (iii) in either $\text{Mod}(A)$ or in $\text{GrMod}(A)$; since $A$ and $k$ are f.g.\ modules, the discussion in \cite[\S 1.4]{levasseur1992some} shows that the two Ext groups in question are the same.

Let us first consider condition (ii) regarding global dimension. For the purposes of this section, we only need to show that finite global dimension is preserved under cocycle twists. However, we will prove the more general result that left and right global dimension are preserved, regardless of whether they are equal or not. 

We will need the following technical result to compare the global dimension of $A^{G,\mu}$ with that of the twisted group algebra $AG_{\mu}$. There is an analogous version for left modules.
\begin{theorem}[{\cite[Theorem 7.2.8(i)]{mcconnell2001noncommutative}}]\label{theorem: mcrobtechnical}
Let $R,S$ be rings with $R \subseteq S$ such that $R$ is an $(R,R)$-bimodule direct summand of $S$. Then
\begin{equation*}%\label{eq: gldimbimodstr}
\text{rgldim }R \leq \text{rgldim }S +\text{pdim }S_R. 
\end{equation*}
\end{theorem}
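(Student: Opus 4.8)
The plan is to bound $\text{pdim } M_R$ for an arbitrary right $R$-module $M$ by $\text{rgldim } S + \text{pdim } S_R$ and then take the supremum over all $M$. If either summand on the right is infinite there is nothing to prove, so we may assume both are finite.

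First I would relate $M$ to an $S$-module. By hypothesis we can write $S = R \oplus C$ for some $(R,R)$-bimodule $C$; tensoring on the left with $M$ over $R$ gives $M \otimes_R S \cong M \oplus (M \otimes_R C)$ as right $R$-modules. Hence $M$ is a direct summand of the right $R$-module underlying the right $S$-module $N := M \otimes_R S$ (which carries its $S$-module structure via the right tensor factor). Since projective dimension cannot increase on passing to a direct summand, $\text{pdim } M_R \leq \text{pdim } N_R$.

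The heart of the argument is a change-of-rings estimate: for any right $S$-module $N$ one has $\text{pdim } N_R \leq \text{pdim } N_S + \text{pdim } S_R$. I would prove this by induction on $n = \text{pdim } N_S$. When $n = 0$, $N$ is a direct summand of a free right $S$-module, so $\text{pdim } N_R \leq \text{pdim } S_R$, using that projective dimension of a direct sum is the supremum of those of the summands and restricts well to summands. When $n > 0$, choose a short exact sequence $0 \to K \to P \to N \to 0$ of right $S$-modules with $P$ projective, so that $\text{pdim } K_S = n - 1$; applying the induction hypothesis to $K$ together with the standard inequality $\text{pdim } N_R \leq \max\{\text{pdim } K_R + 1,\ \text{pdim } P_R\}$ for a short exact sequence of this shape gives $\text{pdim } N_R \leq \max\{n + \text{pdim } S_R,\ \text{pdim } S_R\} = n + \text{pdim } S_R$. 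Chaining everything, and using $\text{pdim } N_S \leq \text{rgldim } S$ since $N$ is a right $S$-module, yields
\[
\text{pdim } M_R \;\leq\; \text{pdim } N_R \;\leq\; \text{pdim } N_S + \text{pdim } S_R \;\leq\; \text{rgldim } S + \text{pdim } S_R,
\]
and taking the supremum over $M$ completes the proof.

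I expect the main obstacle to be the change-of-rings estimate in the third paragraph — specifically making the induction bookkeeping precise and invoking the correct form of the behaviour of projective dimension in a short exact sequence with a projective middle term. The bimodule splitting and the elementary properties of projective dimension under direct sums and direct summands are routine, but it is worth pausing to confirm that $M \otimes_R S$ genuinely has the right $S$-module structure needed for $\text{pdim}(M\otimes_R S)_S$ to make sense.
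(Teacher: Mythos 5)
The paper offers no proof of this statement---it is quoted directly from McConnell--Robson---and your argument is correct and is essentially the standard one from that source: split $M \otimes_R S \cong M \oplus (M \otimes_R C)$ as right $R$-modules using the bimodule decomposition $S = R \oplus C$, establish the change-of-rings inequality $\text{pdim } N_R \leq \text{pdim } N_S + \text{pdim } S_R$ by induction on $\text{pdim } N_S$, and chain the estimates. All the steps you flag as needing care (the short-exact-sequence inequality $\text{pdim } N \leq \max\{\text{pdim } K + 1, \text{pdim } P\}$, the behaviour of $\text{pdim}$ under direct sums and summands, and the right $S$-module structure on $M \otimes_R S$) are used in their correct form, so there is nothing to add.
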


Without further ado, we now show that left and right global dimension are preserved under cocycle twists.
\begin{proposition}\label{prop: gldim}
One has $\text{rgldim }A=\text{rgldim }A^{G,\mu}$ and $\text{lgldim }A=\text{lgldim }A^{G,\mu}$.
\end{proposition}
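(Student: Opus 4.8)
The plan is to sandwich $A$ and $A^{G,\mu}$ between the common overring $AG_{\mu}$ and exploit Lemma~\ref{lem: fflat}, which says that $AG_{\mu}$ decomposes as a finite direct sum of copies of $A^{G,\mu}$ (each free of rank $1$) on each side, and likewise is free of finite rank over $A$ on each side. In particular $A^{G,\mu}$ (resp.\ $A$) is an $(A^{G,\mu},A^{G,\mu})$-bimodule (resp.\ $(A,A)$-bimodule) direct summand of $AG_{\mu}$, and $\text{pdim}(AG_{\mu})_{A^{G,\mu}} = 0 = \text{pdim}(AG_{\mu})_A$ since those modules are free. So Theorem~\ref{theorem: mcrobtechnical} applied with $R = A^{G,\mu}$, $S = AG_{\mu}$ gives $\text{rgldim }A^{G,\mu} \le \text{rgldim }AG_{\mu}$, and applied with $R = A$, $S = AG_{\mu}$ gives $\text{rgldim }A \le \text{rgldim }AG_{\mu}$.

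For the reverse inequalities I would use faithful flatness. Since $AG_{\mu}$ is faithfully flat (indeed free of finite rank) as a right $A^{G,\mu}$-module, and free — hence also of finite projective dimension $0$ — as a left $A^{G,\mu}$-module, a standard change-of-rings argument shows $\text{pdim}_{AG_{\mu}}(M \otimes_{A^{G,\mu}} AG_{\mu}) \le \text{pdim}_{A^{G,\mu}} M$ for a right $A^{G,\mu}$-module $M$, while faithful flatness of $AG_{\mu}$ over $A^{G,\mu}$ forces this to be an equality (a projective resolution of $M\otimes AG_\mu$ pulls back, via faithful flatness, to detect the projective dimension of $M$). Hence $\text{rgldim }AG_{\mu} \ge \text{rgldim }A^{G,\mu}$; symmetrically $\text{rgldim }AG_{\mu} \ge \text{rgldim }A$. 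Combining, $\text{rgldim }A = \text{rgldim }AG_{\mu} = \text{rgldim }A^{G,\mu}$. The left-handed statement is identical, using the left-module freeness assertions of Lemma~\ref{lem: fflat} and the left-module version of Theorem~\ref{theorem: mcrobtechnical}.

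The one point requiring a little care — and the step I expect to be the main obstacle — is making the change-of-rings comparison $\text{pdim}_{AG_\mu}(M \otimes_{A^{G,\mu}} AG_\mu) = \text{pdim}_{A^{G,\mu}} M$ fully rigorous in this noncommutative, noncentral setting: $AG_{\mu}$ is not a central extension of $A^{G,\mu}$, so one must invoke the correct form of the change-of-rings spectral sequence or the Auslander-type results (e.g.\ \cite[Theorem~7.2.6]{mcconnell2001noncommutative} together with the fact that $AG_{\mu}$ is free on both sides over $A^{G,\mu}$). An alternative that avoids this is to run the same Theorem~\ref{theorem: mcrobtechnical} argument twice in a loop: first to get $\text{rgldim }A^{G,\mu} \le \text{rgldim }AG_{\mu}$, then, since by Lemma~\ref{lem: fflat} $A^{G,\mu}G_{\mu^{-1}}$ is a free finite extension of $A^{G,\mu}$ with $(A^{G,\mu}G_{\mu^{-1}})^G \cong A$ — so that $A^{G,\mu}$ plays the role of "$A$" and $A$ the role of "$A^{G,\mu}$" for the inverse cocycle — to get $\text{rgldim }A \le \text{rgldim }A^{G,\mu}G_{\mu^{-1}}$; bounding $\text{rgldim }AG_{\mu}$ and $\text{rgldim }A^{G,\mu}G_{\mu^{-1}}$ each below by $\text{rgldim }A$ via the same summand trick then closes the chain of inequalities without ever needing an exact equality in a change-of-rings statement. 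I would present the argument in this second, self-contained form, citing only Theorem~\ref{theorem: mcrobtechnical} and Lemma~\ref{lem: fflat}.
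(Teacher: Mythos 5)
Your proposal has a genuine logical gap: every inequality you actually derive points the same way, so the chain of inequalities never closes. Theorem \ref{theorem: mcrobtechnical} only ever bounds the global dimension of the \emph{subring} by that of the overring, so it yields $\text{rgldim }A^{G,\mu}\le\text{rgldim }AG_{\mu}$ and $\text{rgldim }A\le\text{rgldim }AG_{\mu}$ and nothing more. Your ``reverse inequality'' via change of rings, $\text{pdim}_{AG_{\mu}}(M\otimes_{A^{G,\mu}}AG_{\mu})=\text{pdim}_{A^{G,\mu}}M$, is in fact correct (the extension is free on both sides and $A^{G,\mu}$ is a bimodule summand), but taking suprema over right $A^{G,\mu}$-modules $M$ it again gives only $\text{rgldim }A^{G,\mu}\le\text{rgldim }AG_{\mu}$: induced modules form a special class of $AG_{\mu}$-modules, so their projective dimensions bound $\text{rgldim }AG_{\mu}$ from below, not from above. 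The ``self-contained'' variant fares no better: applying the summand trick to $A\subset A^{G,\mu}G_{\mu^{-1}}$ and to $A^{G,\mu}\subset A^{G,\mu}G_{\mu^{-1}}$ produces $\text{rgldim }A\le\text{rgldim }A^{G,\mu}G_{\mu^{-1}}$ and $\text{rgldim }A^{G,\mu}\le\text{rgldim }A^{G,\mu}G_{\mu^{-1}}$, and from four inequalities all of the shape ``subring $\le$ overring'' one cannot deduce $\text{rgldim }A=\text{rgldim }A^{G,\mu}$.

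What is missing is a descent statement of the form $\text{rgldim }AG_{\mu}\le\text{rgldim }A$, and this is precisely where the hypothesis $\text{char}(k)\nmid|G|$ must enter: $AG_{\mu}$ is a crossed product $A\ast G$, and by a Maschke-type averaging every right $AG_{\mu}$-module $N$ is a direct summand of the induced module $N\otimes_A AG_{\mu}$, whence $\text{pdim}_{AG_{\mu}}N\le\text{pdim}_A N\le\text{rgldim }A$. This is \cite[Theorem 7.5.6(iii)]{mcconnell2001noncommutative}, which the paper's proof invokes as its very first step to obtain $\text{rgldim }A=\text{rgldim }AG_{\mu}$; combined with your summand-trick inequality this gives $\text{rgldim }A^{G,\mu}\le\text{rgldim }AG_{\mu}=\text{rgldim }A$, and the role reversal through $A^{G,\mu}G_{\mu^{-1}}$ (which you correctly set up) supplies the opposite inequality. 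So your overall architecture matches the paper's, but you must add the crossed-product equality, or an equivalent Maschke descent for modules over $AG_{\mu}$, as a third ingredient: Theorem \ref{theorem: mcrobtechnical} and Lemma \ref{lem: fflat} alone cannot prove the proposition.
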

\begin{proof}
We will give the proof for right global dimension, from which a left-sided proof can easily be derived. Since $AG_{\mu}$
has the structure of a crossed product we can apply \cite[Theorem 7.5.6(iii)]{mcconnell2001noncommutative} to
conclude that $\text{rgldim }A=\text{rgldim }AG_{\mu}$. By Lemma \ref{lem: fflat} we know that $A^{G,\mu}$ is an
$(A^{G,\mu},A^{G,\mu})$-bimodule direct summand of $AG_{\mu}$. We may therefore apply Theorem \ref{theorem: mcrobtechnical},
which tells us that
\begin{equation*}%\label{eq: gldiminequality}
\text{rgldim }A^{G,\mu} \leq \text{rgldim }AG_{\mu} + \text{pdim }(AG_{\mu})_{A^{G,\mu}}= \text{rgldim }AG_{\mu}.
\end{equation*}

Here $\text{pdim }(AG_{\mu})_{A^{G,\mu}}=0$ since the module is free by Lemma \ref{lem: fflat}. We have proved that $\text{rgldim }A^{G,\mu} \leq \text{rgldim }A$.

Now we repeat the argument with the roles of $A$ and $A^{G,\mu}$ reversed, considering them as subrings of $A^{G,\mu}G_{\mu^{-1}}$. One obtains the opposite inequality, which completes the proof.
\end{proof}

Let us now address the AS-Gorenstein property. Our main tool to prove the preservation of this property will be the following result of Brown and Levasseur.
\begin{proposition}[{\cite[Proposition 1.6]{brown1985cohomology}}]\label{prop: brownlevass}
Let $R$ and $S$ be rings and $R \rightarrow S$ a ring homomorphism such that $S$ is a flat as a left and right
$R$-module. Let $X$ be an $(R,R)$-bimodule such that the $(R,S)$-bimodule $X \otimes_R S$ is an
$(S,S)$-bimodule. Then for every f.g.\ left $R$-module $M$ and all $i \geq 0$, there are isomorphisms
of right $S$-modules,
\begin{equation*}%\label{eq: exttensorreln}
\text{Ext}_R^i(M,X)\otimes_R S \cong \text{Ext}_S^i(S \otimes_R M,X \otimes_R S). 
\end{equation*}
\end{proposition}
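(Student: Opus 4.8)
The plan is to reduce everything to a single finitely generated projective resolution of $M$ and two applications of the flatness of $S$. First I would fix a resolution $P_{\bullet}\to M$ of $M$ by finitely generated projective (indeed free) left $R$-modules; this is possible because $M$ is finitely generated, and in the intended applications $R$ is noetherian so the higher syzygies are finitely generated as well. Since $S$ is flat as a \emph{right} $R$-module, the functor $S\otimes_R-$ is exact, so $S\otimes_R P_{\bullet}\to S\otimes_R M$ is a resolution of the left $S$-module $S\otimes_R M$ by finitely generated projective left $S$-modules; hence $\operatorname{Ext}_S^i(S\otimes_R M, X\otimes_R S)=H^i\bigl(\operatorname{Hom}_S(S\otimes_R P_{\bullet}, X\otimes_R S)\bigr)$, the cohomology being taken in the category of right $S$-modules — this is precisely where the hypothesis that $X\otimes_R S$ is an $(S,S)$-bimodule is used, since it makes each $\operatorname{Hom}_S(S\otimes_R P_i, X\otimes_R S)$ a right $S$-module.

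Next I would produce a natural isomorphism of complexes of right $S$-modules
\[
\operatorname{Hom}_R(P_{\bullet},X)\otimes_R S \;\xrightarrow{\ \sim\ }\; \operatorname{Hom}_R(P_{\bullet}, X\otimes_R S) \;\xrightarrow{\ \sim\ }\; \operatorname{Hom}_S(S\otimes_R P_{\bullet}, X\otimes_R S).
\]
The first arrow is the evaluation map $f\otimes s\mapsto(p\mapsto f(p)\otimes s)$; it is an isomorphism when $P_i=R$ (both sides equal $X\otimes_R S$), and both sides are additive in $P_i$, so it is an isomorphism whenever $P_i$ is finitely generated projective. The second arrow is the extension-of-scalars/restriction adjunction $\operatorname{Hom}_S(S\otimes_R P_i, Y)\cong\operatorname{Hom}_R(P_i,Y)$ with $Y=X\otimes_R S$, which is natural in $P_i$ and compatible with the right $S$-module structures. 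Finally, since $S$ is flat as a \emph{left} $R$-module, $-\otimes_R S$ commutes with cohomology of the complex $\operatorname{Hom}_R(P_{\bullet},X)$ of right $R$-modules, giving $H^i\bigl(\operatorname{Hom}_R(P_{\bullet},X)\otimes_R S\bigr)\cong\operatorname{Ext}_R^i(M,X)\otimes_R S$. Stringing these identifications together yields the claimed isomorphism of right $S$-modules, and the standard comparison-of-resolutions argument shows it does not depend on the choice of $P_{\bullet}$.

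The only genuinely non-formal point — and hence the main obstacle — is the evaluation isomorphism $\operatorname{Hom}_R(P,X)\otimes_R S\cong\operatorname{Hom}_R(P,X\otimes_R S)$, which really does require $P$ to be finitely generated (projective); this is the sole place the hypothesis that $M$ is finitely generated is needed, and it is what forces us to work with a \emph{finitely generated} projective resolution rather than an arbitrary one. Everything else is bookkeeping: keeping straight which of the two flatness hypotheses on $S$ is invoked where (right-flatness to resolve $S\otimes_R M$, left-flatness to pull $\otimes_R S$ through cohomology), and verifying that every map in sight is right $S$-linear so that the resulting isomorphism is one of right $S$-modules.
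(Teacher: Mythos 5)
Your argument is correct and is essentially the standard proof of flat base change for Ext — the same resolution-plus-evaluation-map argument used in the cited source \cite[Proposition 1.6]{brown1985cohomology}; the paper itself only quotes the result and gives no proof. Your one caveat is the right one: the evaluation isomorphism $\operatorname{Hom}_R(P,X)\otimes_R S\cong\operatorname{Hom}_R(P,X\otimes_R S)$ needs every term of the resolution to be finitely generated projective, which requires $R$ noetherian (or $M$ of type $FP_\infty$) rather than just $M$ finitely generated, and this holds in all of the paper's applications since there $R$ is $A$, $A^{G,\mu}$ or $AG_{\mu}$ under the noetherian hypotheses.
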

\begin{rem}\label{rem: x=s}
For some of our applications of Proposition \ref{prop: brownlevass} we will take $R=X$. In that case $X \otimes_R S \cong S$ as an $(R,S)$-bimodule, from which $X \otimes_R S$ inherits a natural $(S,S)$-bimodule structure. 
\end{rem}

\begin{proposition}\label{prop: asgor}
Assume that Hypotheses \ref{hyp: gradedcase} hold and that $A$ is connected graded. Then $A$ is AS-Gorenstein of global dimension $d$ if and only if $A^{G,\mu}$ shares this property.
\end{proposition}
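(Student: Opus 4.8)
The plan is to compare the $\text{Ext}$-groups of Definition~\ref{defn: asregular}(iii) for $A$ and for $A^{G,\mu}$ by routing through the twisted group algebra $AG_{\mu}$, which by Lemma~\ref{lem: fflat} is a faithfully flat extension of each of them, free of rank $|G|$ on either side. First I would invoke Brown and Levasseur's Proposition~\ref{prop: brownlevass} twice. Taking $R=A$, $S=AG_{\mu}$ and $X=R$ --- so that $X\otimes_R S\cong AG_{\mu}$ acquires an $(S,S)$-bimodule structure by Remark~\ref{rem: x=s} --- and $M={}_{A}k$, it gives for all $i$ an isomorphism of (graded) right $AG_{\mu}$-modules
\begin{equation*}
\text{Ext}^i_A(k,A)\otimes_A AG_{\mu}\;\cong\;\text{Ext}^i_{AG_{\mu}}(AG_{\mu}\otimes_A k,\,AG_{\mu}).
\end{equation*}
Taking instead $R=A^{G,\mu}$ (using the inclusion $A^{G,\mu}\hookrightarrow AG_{\mu}$ from Proposition~\ref{prop: twoconstrequal}) yields the same statement with $A$ replaced by $A^{G,\mu}$ throughout.

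The crux is that the two left $AG_{\mu}$-modules $AG_{\mu}\otimes_A k$ and $AG_{\mu}\otimes_{A^{G,\mu}}k$ coincide: I claim both equal $AG_{\mu}/(A_{\geq 1}\otimes kG_{\mu})$, where $A_{\geq 1}=\bigoplus_{n\geq 1}A_n$ is the augmentation ideal. Writing $AG_{\mu}=A\otimes kG_{\mu}$, the left ideal $(AG_{\mu})A_{\geq 1}$ generated by the augmentation ideal of $A$ is plainly $A_{\geq 1}\otimes kG_{\mu}$. On the other side, under the identification $A^{G,\mu}=(AG_{\mu})^{G}=\bigoplus_{g\in G}A_g\otimes kg$ the augmentation ideal $(A^{G,\mu})_{\geq 1}$ maps into $\bigoplus_{g}(A_{\geq 1}\cap A_g)\otimes kg$, and since $A_{\geq 1}$ is a $G$-stable ideal, Maschke's theorem gives $A_{\geq 1}=\bigoplus_g(A_{\geq 1}\cap A_g)$; hence the left ideal of $AG_{\mu}$ it generates is again $A_{\geq 1}\otimes kG_{\mu}$. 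Feeding this into the two displayed isomorphisms produces, unconditionally, an isomorphism of right $AG_{\mu}$-modules
\begin{equation*}
\text{Ext}^i_A(k,A)\otimes_A AG_{\mu}\;\cong\;\text{Ext}^i_{A^{G,\mu}}(k,A^{G,\mu})\otimes_{A^{G,\mu}}AG_{\mu}\qquad\text{for all }i.
\end{equation*}

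Now I would descend. Suppose $A$ is AS-Gorenstein of global dimension $d$, so the left-hand side above vanishes for $i\neq d$ and equals $k\otimes_A AG_{\mu}\cong kG_{\mu}$ for $i=d$. Faithful flatness of $AG_{\mu}$ over $A^{G,\mu}$ (Lemma~\ref{lem: fflat}) forces $\text{Ext}^i_{A^{G,\mu}}(k,A^{G,\mu})=0$ for $i\neq d$; and since $AG_{\mu}$ is free of rank $|G|$ as a right $A^{G,\mu}$-module, comparing $k$-dimensions in the $i=d$ case gives $|G|\cdot\dim_k\text{Ext}^d_{A^{G,\mu}}(k,A^{G,\mu})=\dim_k kG_{\mu}=|G|$, whence this nonzero graded module is $1$-dimensional and so $\cong k$ (up to a degree shift). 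The symmetric argument with left modules disposes of the other half of condition (iii), and Proposition~\ref{prop: gldim} gives $\text{gldim }A^{G,\mu}=d$; thus $A^{G,\mu}$ is AS-Gorenstein of global dimension $d$. The converse is the same argument with the roles of $A$ and $A^{G,\mu}$ interchanged, descending instead along the free, faithfully flat extension $A\hookrightarrow AG_{\mu}$.

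The part I expect to require the most care is the identification in the middle paragraph: one must pin down exactly how $A^{G,\mu}=(AG_{\mu})^{G}$ sits inside $AG_{\mu}$ and which left ideal its augmentation ideal generates, and also check that every isomorphism in sight is $\N$-graded, so that the final "$\cong k$" may legitimately be read with its Gorenstein shift. Granted Lemma~\ref{lem: fflat}, the two applications of Proposition~\ref{prop: brownlevass} and the faithfully flat descent are then entirely routine.
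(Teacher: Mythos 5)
Your proposal is correct and follows essentially the same route as the paper: two applications of Proposition~\ref{prop: brownlevass} through $AG_{\mu}$, the identification of both $AG_{\mu}\otimes_A k$ and $AG_{\mu}\otimes_{A^{G,\mu}}k$ with $kG_{\mu}$, and faithfully flat descent via Lemma~\ref{lem: fflat}. The only (minor) divergence is in the case $i=d$: where you count $k$-dimensions using freeness of rank $|G|$ to conclude $\text{Ext}^d_{A^{G,\mu}}(k,A^{G,\mu})$ is one-dimensional, the paper instead restricts the isomorphism along the bimodule decomposition $AG_{\mu}\cong\bigoplus_{g}{^{\text{id}}(A^{G,\mu})^{\phi_g}}$ and matches $G$-graded components --- both are valid, and your dimension count is arguably the cleaner finish.
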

\begin{proof}
We will give the proof in the only if direction when $k$ and $A$ are considered as left $A$-modules. The proof in the opposite direction is identical by untwisting, while the proof for right modules is almost identical to that below; the only difference is that it requires the use of a right-handed version of Proposition \ref{prop: brownlevass}.

First, observe that under the $(\N,G)$-bigrading on $AG_{\mu}$ the subalgebra consisting of elements that have degree zero under the \N-grading is the twisted group algebra $kG_{\mu}$. As right $AG_{\mu}$-modules one has isomorphisms
\begin{equation}\label{eq: degree1factor}
k \otimes_A AG_{\mu} \cong k \otimes_{A^{G,\mu}} AG_{\mu} \cong kG_{\mu}, 
\end{equation}
and as left $AG_{\mu}$-modules we have
\begin{equation}\label{eq: degree1factor1}
AG_{\mu} \otimes_A k \cong AG_{\mu} \otimes_{A^{G,\mu}} k \cong kG_{\mu}.
\end{equation}

We now proceed by applying Proposition \ref{prop: brownlevass} with $R=X=A$, $S=AG_{\mu}$ and $M=k$. To see that the hypotheses of that result are satisfied, observe that $A \subset AG_{\mu}$ is flat by Lemma \ref{lem: fflat} and recall Remark \ref{rem: x=s}. Applying Proposition \ref{prop: brownlevass} gives
\begin{gather}
\begin{aligned}\label{eq: asgorfirststep}
\text{Ext}^i_A(k,A) \otimes_A AG_{\mu} &\cong \text{Ext}^i_{AG_{\mu}}(AG_{\mu} \otimes_A k,A \otimes_A AG_{\mu}) 
\\ &\cong  \text{Ext}^i_{AG_{\mu}}(kG_{\mu},AG_{\mu}),
\end{aligned} 
\end{gather}
by using \eqref{eq: degree1factor1}. Since $A$ is AS-Gorenstein of global dimension $d$ we know that the left hand side is non-zero only for $i=d$. In that case it is equal to $k \otimes_A AG_{\mu} \cong kG_{\mu}$ by using \eqref{eq: degree1factor}. 

We would now like to apply Proposition \ref{prop: brownlevass} a second time, using $R=X=A^{G,\mu}$, $S=AG_{\mu}$ and $M=k$. We may apply the same argument as used earlier in the proof, mutatis mutandis, to see that the hypotheses of that result are satisfied. Applying Proposition \ref{prop: brownlevass} we obtain
\begin{gather}
\begin{aligned}\label{eq: asgorsecondstep}
\text{Ext}^i_{A^{G,\mu}}(k,A^{G,\mu}) \otimes_{A^{G,\mu}} AG_{\mu} & \cong  \text{Ext}^i_{AG_{\mu}}(AG_{\mu} \otimes_{A^{G,\mu}} k, A^{G,\mu} \otimes_{A^{G,\mu}} AG_{\mu}) \\
   & \cong  \text{Ext}^i_{AG_{\mu}}(kG_{\mu}, AG_{\mu}).
\end{aligned}
\end{gather}
Combining the information from \eqref{eq: asgorfirststep} and \eqref{eq: asgorsecondstep} gives
\begin{equation*}%\label{eq: asgorthirdstep}
\text{Ext}^i_{A^{G,\mu}}(k,A^{G,\mu}) \otimes_{A^{G,\mu}} AG_{\mu} \cong \left\{ \begin{array}{cl} kG_{\mu} & \text{if }i=d, \\ 0 & \text{if }i \neq
d. \end{array}\right. 
\end{equation*}

As $A^{G,\mu} \subset AG_{\mu}$ is a faithfully flat extension on the left, $\text{Ext}^i_{A^{G,\mu}}(k,A^{G,\mu})$ must vanish in all degrees for which $i \neq d$. When $i=d$ we have
\begin{equation}\label{eq: asgorfinalstep}
\text{Ext}^d_{A^{G,\mu}}(k,A^{G,\mu}) \otimes_{A^{G,\mu}} AG_{\mu} \cong kG_{\mu}, 
\end{equation}
as right $AG_{\mu}$-modules. 

Since $k$ and $A^{G,\mu}$ are f.g.\ $\N$-graded left $A^{G,\mu}$-modules, the module structure defined in
\cite[Theorem 1.15]{rotman2008introduction} implies that $\text{Ext}^i_{A^{G,\mu}}(k,A^{G,\mu})$ is a $\Z$-graded group.
This $\Z$-grading is compatible with the right $A^{G,\mu}$-module structure, in which case the graded module structure
on $\text{Ext}^i_{A^{G,\mu}}(k,A^{G,\mu})$ allows us to complete the proof as follows. Consider the
$(A^{G,\mu},A^{G,\mu})$-bimodule structure of $AG_{\mu}$ described in Lemma \ref{lem: fflat}. Upon restricting the isomorphism in \eqref{eq: asgorfinalstep} to $A^{G,\mu}$, one obtains
\begin{equation*}%\label{eq: asgorfinalstep}
\bigoplus_{g\in G} \text{Ext}^d_{A^{G,\mu}}(k,A^{G,\mu})^{\phi_{g}} \cong (kG_{\mu})_{A^{G,\mu}}.
\end{equation*}
By considering the $G$-graded components of this isomorphism and noting that $\phi_e$ is the identity, one obtains the isomorphism of right $A^{G,\mu}$-modules $\text{Ext}^d_{A^{G,\mu}}(k,A^{G,\mu}) \cong k$, which proves the result.
\end{proof}

We can now combine several previous results to prove the main theorem of this section.
\begin{theorem}\label{thm: asreg}
Assume that Hypotheses \ref{hyp: gradedcase} hold and $A$ is connected graded. Then $A$ is AS-regular if and only if $A^{G,\mu}$ is. If in addition $A$ has global and GK dimension $\leq 4$, then $A$ is a domain if and only if $A^{G,\mu}$ is a domain.
\end{theorem}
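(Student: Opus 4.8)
The plan is to deduce the theorem by assembling the results already proved. For the first assertion, recall that being AS-regular means (i) finite GK dimension, (ii) finite global dimension equal to some $d$, and (iii) AS-Gorenstein of that dimension. By Lemma \ref{lem: hilbseries}(i) we have $\text{GKdim }A = \text{GKdim }A^{G,\mu}$, so (i) transfers in both directions. By Proposition \ref{prop: gldim}, $\text{lgldim }A = \text{lgldim }A^{G,\mu}$ and $\text{rgldim }A = \text{rgldim }A^{G,\mu}$, so $A$ has global dimension $d$ (in particular the left and right global dimensions agree) if and only if $A^{G,\mu}$ does. Finally Proposition \ref{prop: asgor} says $A$ is AS-Gorenstein of global dimension $d$ if and only if $A^{G,\mu}$ is. Chaining these three equivalences gives that $A$ is AS-regular if and only if $A^{G,\mu}$ is; one also needs Lemma \ref{lem: hilbseries}(ii) to know that $A^{G,\mu}$ remains connected graded so that the definition of AS-regularity even applies to it.

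For the domain statement, the strategy is to pass through the faithfully flat overring $AG_{\mu}$. First I would observe that $AG_{\mu}$ is a domain if and only if $A^{G,\mu}$ is: on one hand $A^{G,\mu}$ embeds in $AG_{\mu}$ (it is a subring, by Lemma \ref{lem: fflat}), so if $AG_{\mu}$ is a domain so is $A^{G,\mu}$; conversely, if $A^{G,\mu}$ is a domain then since $AG_{\mu} = \bigoplus_{g\in G}\,{}^{\text{id}}(A^{G,\mu})^{\phi_g}$ is a free $A^{G,\mu}$-module on both sides and moreover a faithfully flat extension, one can rule out zero-divisors. By the symmetry of the construction (replacing $A$, $\mu$ by $A^{G,\mu}$, $\mu^{-1}$ as in the proof of Proposition \ref{prop: uninoeth}), the same reasoning shows $AG_{\mu}$ is a domain if and only if $A$ is a domain. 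Hence $A$ is a domain $\iff$ $AG_{\mu}$ is a domain $\iff$ $A^{G,\mu}$ is a domain, and the hypothesis on global and GK dimension $\leq 4$ is presumably what guarantees (via a known result on AS-regular algebras of low dimension, e.g. that such algebras are domains, or that one can test the domain property on the associated graded / via Hilbert series growth) that the relevant step goes through.

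The main obstacle I anticipate is the domain part, specifically deducing that $A^{G,\mu}$ is a domain from $AG_{\mu}$ being one, or vice versa, without additional input. In general a ring need not inherit the domain property from a faithfully flat extension, nor does a subring of a domain automatically interact well with the grading here. The dimension hypothesis $\leq 4$ strongly suggests the argument invokes a structural fact about AS-regular algebras of global dimension at most $4$ — plausibly that such an algebra is a domain precisely when its Hilbert series has no "unexpected" cancellation, or that one compares with the twisted algebra via the bimodule decomposition of Lemma \ref{lem: fflat} together with the fact that the $1\otimes g$ are regular elements. So the delicate point is to show that a product of two nonzero homogeneous-with-respect-to-$G$ elements of $AG_{\mu}$ is nonzero using regularity of $1\otimes g$ and the domain property on the identity component, and then to invoke the dimension bound to handle the remaining case. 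I would expect the write-up to lean on a cited result about low-dimensional AS-regular algebras being domains, applied to both $A$ and $A^{G,\mu}$ (which is legitimate once the first half of the theorem is established), thereby making the domain claim nearly immediate once one knows $A^{G,\mu}$ is again AS-regular of the same dimension.
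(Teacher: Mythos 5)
The first half of your argument is exactly the paper's proof: Theorem \ref{thm: asreg}'s AS-regularity statement is obtained by chaining Lemma \ref{lem: hilbseries} (GK dimension and connected-gradedness), Proposition \ref{prop: gldim} (global dimension) and Proposition \ref{prop: asgor} (the Gorenstein condition). No issues there.

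For the domain statement, however, your primary strategy --- establishing ``$A$ is a domain $\iff AG_{\mu}$ is a domain $\iff A^{G,\mu}$ is a domain'' --- cannot work, because $AG_{\mu}$ is essentially \emph{never} a domain when $|G|>1$. It contains the twisted group algebra $kG_{\mu}$, which under Hypotheses \ref{hyp: generalcase} is a semisimple $k$-algebra of dimension $|G|>1$ over an algebraically closed field and hence has zero divisors (in Odesskii's example $AG_{\mu}\cong A\otimes M_2(\C)$). So the implication ``$A$ a domain $\Rightarrow AG_{\mu}$ a domain'' is false, and the step where you ``rule out zero-divisors'' in $AG_{\mu}$ from $A^{G,\mu}$ being a domain is likewise false; only the trivial direction (a subring of a domain is a domain) survives, and it points the wrong way for what you need. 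This is precisely why the domain property is \emph{not} preserved by general cocycle/Zhang twists, as the paper remarks immediately after the theorem. That said, your closing guess is the paper's actual argument: once the first half shows $A^{G,\mu}$ is again AS-regular, and Lemma \ref{lem: hilbseries} shows it has the same GK and global dimension $\leq 4$, one simply invokes \cite[Theorem 3.9]{artin1991modules}, which says such AS-regular algebras are automatically domains; applied to both $A$ and $A^{G,\mu}$ this makes the ``if and only if'' immediate. So the correct idea is present in your write-up, but it should replace, not supplement, the faithful-flatness route, which is a dead end here.
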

\begin{proof}
The statement about AS-regularity follows from Lemma \ref{lem: hilbseries} and Propositions \ref{prop: gldim} and \ref{prop: asgor}. The second part of the theorem follows from \cite[Theorem 3.9]{artin1991modules}.  
\end{proof}
The property of being a domain is \emph{not} preserved by Zhang twists. Such twists preserve this property when $G$ is an ordered semigroup \cite[Proposition 5.2]{zhang1998twisted} but not in general. See \cite{davies2014cocycle2} for further examples.

\subsection{The Koszul property}\label{subsec: koszul}
Our next aim is to show that the Koszul property is preserved under cocycle twists. We begin by giving one of several equivalent definitions for the property.

% In the proof of Proposition \ref{prop: asgor} we used the fact that the cohomology group $\text{Ext}_{A}^i(M,N)$ is
% $\Z$-graded when $M$ and $N$ are f.g.\ \N-graded $A$ modules. We use this idea to define a Koszul algebra.
\begin{defn}[{\cite[ Proposition 2.1.3]{koszul1996beilinson}}]\label{defn: koszulcomplex} 
A c.g.\ $k$-algebra $A$ is \emph{Koszul} if and only if for all $i \geq 0$ the $\Z$-graded components of
$\text{Ext}_A^i(k,k)$ vanish in all degrees other than degree $i$.
\end{defn}

Let us now prove our result concerning preservation of the Koszul property.
\begin{proposition}\label{prop: koszul}
In addition to Hypotheses \ref{hyp: gradedcase}, assume that $A$ is connected graded and its defining relations are in degree 2. Then
$A$ is Koszul if and only $A^{G,\mu}$ is.
\end{proposition}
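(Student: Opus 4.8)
The strategy is to mimic the proof of Proposition~\ref{prop: asgor}, replacing the $\text{Ext}^i_A(k,A)$ computation there with the bigraded computation of $\text{Ext}^i_A(k,k)$ that detects Koszulity via Definition~\ref{defn: koszulcomplex}. The key structural point is that $AG_{\mu}$ carries an $(\N,G)$-bigrading (the $\N$-grading inherited from $A$ and the $G$-grading from the twisted group algebra $kG_{\mu}$), and twisting by $\mu$ only affects the $G$-grading, never the $\N$-grading. So $\text{Ext}$-groups computed over $AG_{\mu}$ inherit a $\Z$-grading from the $\N$-grading, and this $\Z$-grading is exactly the one that must be concentrated in degree $i$ for the relevant simple module.

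First I would record that, by Lemma~\ref{lemma: finitelygenerated}, $A^{G,\mu}$ is connected graded (via Lemma~\ref{lem: hilbseries}), generated in degree~$1$, and by Lemma~\ref{lem: defrelns} its defining relations remain in degree~$2$; so the Koszul criterion of Definition~\ref{defn: koszulcomplex} applies to $A^{G,\mu}$. Next I would invoke Proposition~\ref{prop: brownlevass} twice in the same pattern as in Proposition~\ref{prop: asgor}, but now with $R=A$, $S=AG_{\mu}$, $X=k$, $M=k$ the first time and $R=A^{G,\mu}$, $S=AG_{\mu}$, $X=k$, $M=k$ the second time. One must first check that the hypotheses hold: $A \subset AG_{\mu}$ and $A^{G,\mu}\subset AG_{\mu}$ are faithfully flat on both sides by Lemma~\ref{lem: fflat}, and one needs $k \otimes_A AG_{\mu}$ to be an $(AG_{\mu},AG_{\mu})$-bimodule. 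By \eqref{eq: degree1factor}, $k \otimes_A AG_{\mu} \cong kG_{\mu}$ as a right $AG_{\mu}$-module; since the degree-zero part of $AG_{\mu}$ under the $\N$-grading is precisely $kG_{\mu}$, this carries the required two-sided structure (and similarly with $A^{G,\mu}$ in place of $A$). Running both applications of Proposition~\ref{prop: brownlevass} yields, for all $i$,
\begin{equation*}
\text{Ext}^i_A(k,k) \otimes_A AG_{\mu} \cong \text{Ext}^i_{AG_{\mu}}(kG_{\mu},kG_{\mu}) \cong \text{Ext}^i_{A^{G,\mu}}(k,k) \otimes_{A^{G,\mu}} AG_{\mu},
\end{equation*}
as right $AG_{\mu}$-modules.

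Now I would exploit the $\N$-grading. All the modules and ring maps above are $\N$-graded (the maps $A \to AG_{\mu}$, $A^{G,\mu}\to AG_{\mu}$ are degree-preserving, and $k$, $kG_{\mu}$ sit in degree~$0$), so the displayed isomorphisms respect the internal $\Z$-grading on $\text{Ext}$-groups. Since $A$ is Koszul, $\text{Ext}^i_A(k,k)$ is concentrated in internal degree~$i$; tensoring with $AG_{\mu}$ preserves this concentration because $AG_{\mu}$ is a free $A$-module on $G$-homogeneous generators $1\otimes g$, each of internal degree~$0$ (this is visible from Lemma~\ref{lem: fflat}). Hence the middle term $\text{Ext}^i_{AG_{\mu}}(kG_{\mu},kG_{\mu})$ is concentrated in internal degree~$i$, and therefore so is $\text{Ext}^i_{A^{G,\mu}}(k,k) \otimes_{A^{G,\mu}} AG_{\mu}$. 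Finally, faithful flatness of $A^{G,\mu}\subset AG_{\mu}$ (Lemma~\ref{lem: fflat}), together with the fact that $AG_{\mu}$ is free on internal-degree-zero generators over $A^{G,\mu}$, lets me descend: if $\text{Ext}^i_{A^{G,\mu}}(k,k)$ had a nonzero component in internal degree $j \neq i$, that component would survive after $\otimes_{A^{G,\mu}} AG_{\mu}$ and contribute in internal degree~$j$, a contradiction. Thus $\text{Ext}^i_{A^{G,\mu}}(k,k)$ is concentrated in internal degree~$i$ for all $i$, i.e.\ $A^{G,\mu}$ is Koszul. The converse follows by the untwisting symmetry, using $A^{G,\mu}G_{\mu^{-1}}$ in place of $AG_{\mu}$ exactly as in Propositions~\ref{prop: uninoeth} and~\ref{prop: asgor}.

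**Main obstacle.** The delicate point is bookkeeping the \emph{two} gradings simultaneously: Proposition~\ref{prop: brownlevass} is stated for ungraded rings, so I must argue that when everything in sight is $\N$-graded, the isomorphism it produces is an isomorphism of $\Z$-graded right modules — essentially the same remark invoked near the end of the proof of Proposition~\ref{prop: asgor} via \cite[Theorem 1.15]{rotman2008introduction}. The second subtlety is the descent step: tensoring up with $AG_{\mu}$ is faithfully flat and so reflects vanishing, but I need it to reflect vanishing \emph{in each fixed internal degree}, which is why it matters that the free generators $1\otimes g$ of $AG_{\mu}$ over $A^{G,\mu}$ all lie in internal degree~$0$ — so tensoring does not shift internal degrees. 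Once those two points are nailed down, the rest is a direct transcription of the AS-Gorenstein argument.
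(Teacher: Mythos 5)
Your proposal is correct and follows essentially the same route as the paper's proof: two applications of Proposition \ref{prop: brownlevass} with $X=M=k$ identifying both $\text{Ext}_A^i(k,k)\otimes_A AG_{\mu}$ and $\text{Ext}_{A^{G,\mu}}^i(k,k)\otimes_{A^{G,\mu}} AG_{\mu}$ with $\text{Ext}_{AG_{\mu}}^i(kG_{\mu},kG_{\mu})$, followed by the observation that these isomorphisms respect the internal $\Z$-grading and that $AG_{\mu}$ is free over $A$ and over $A^{G,\mu}$ on degree-zero generators, so concentration in internal degree $i$ transfers. Your descent step via faithful flatness is just a rephrasing of the paper's direct comparison $\bigoplus_{|G|}\text{Ext}_A^i(k,k) \cong \bigoplus_{|G|}\text{Ext}_{A^{G,\mu}}^i(k,k)$ of graded vector spaces.
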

\begin{proof}
We wish to apply Proposition \ref{prop: brownlevass} with $R=A$, $S=AG_{\mu}$, $X={_A}k_A$ and $M={_A}k$. Let us check that the hypotheses are satisfied: observe that $A \subset AG_{\mu}$ is flat by Lemma \ref{lem: fflat}, while $X \otimes_R S = kG_\mu$ by \eqref{eq: degree1factor}, whence it has a natural $(AG_{\mu},AG_{\mu})$-bimodule structure. We may therefore apply Proposition \ref{prop: brownlevass}, in which case one has
\begin{gather}
\begin{aligned}\label{eq: koszulbrown}
\text{Ext}_A^i(k,k)\otimes_A AG_{\mu} &\cong \text{Ext}_{AG_{\mu}}^i(AG_{\mu} \otimes_A k,k \otimes_A AG_{\mu}) \\
 &\cong \text{Ext}_{AG_{\mu}}^i(kG_{\mu},kG_{\mu}), 
\end{aligned}
\end{gather}
using \eqref{eq: degree1factor} and \eqref{eq: degree1factor1} to pass from the first line of this equation to the second.

Now set $R= A^{G,\mu}$, $S=AG_{\mu}$, $X={_A}k_A$ and $M={_A}k$. One can use the same argument as earlier in the proof,
mutatis mutandis, to see that the hypotheses of Proposition \ref{prop: brownlevass} are satisfied. Applying that
result we obtain 
\begin{gather}
\begin{aligned}\label{eq: koszulbrown1}
\text{Ext}_{A^{G,\mu}}^i(k,k)\otimes_{A^{G,\mu}} AG_{\mu} &\cong \text{Ext}_{AG_{\mu}}^i(AG_{\mu} \otimes_{A^{G,\mu}} k,k \otimes_{A^{G,\mu}} AG_{\mu}) \\
 &\cong \text{Ext}_{AG_{\mu}}^i(kG_{\mu},kG_{\mu}), 
\end{aligned}
\end{gather}
using \eqref{eq: degree1factor} and \eqref{eq: degree1factor1} once again. 

The $\Z$-grading on $\text{Ext}_A^i(k,k)$ and $\text{Ext}_{A^{G,\mu}}^i(k,k)$ is compatible with their right $A$- and $A^{G,\mu}$-module structures respectively. Thus the tensor products $\text{Ext}_A^i(k,k)\otimes_A AG_{\mu}$ and $\text{Ext}_{A^{G,\mu}}^i(k,k)\otimes_{A^{G,\mu}} AG_{\mu}$ are naturally $\Z$-graded right $AG_{\mu}$-modules. The $\Z$-grading on the cohomology group $\text{Ext}_{AG_{\mu}}^i(kG_{\mu},kG_{\mu})$ is also compatible with its right $AG_{\mu}$-module structure. Moreover, one can see from the proof of Proposition \ref{prop: brownlevass} (in \cite[Proposition 1.6]{brown1985cohomology}) that the isomorphisms in \eqref{eq: koszulbrown} and \eqref{eq: koszulbrown1} respect these $\Z$-graded structures. We may therefore conclude that there is an isomorphism 
\begin{equation}\label{eq: comparedims}
\text{Ext}_A^i(k,k)\otimes_A AG_{\mu} \cong \text{Ext}_{A^{G,\mu}}^i(k,k)\otimes_{A^{G,\mu}} AG_{\mu} 
\end{equation}
of $\Z$-graded right $AG_{\mu}$-modules. 

Using the free module structures of $_A(AG_{\mu})$ and  $_{A^{G,\mu}}(AG_{\mu})$ described in Lemma \ref{lem: fflat}, we may express the isomorphism in \eqref{eq: comparedims} as
\begin{equation}\label{eq: comparedims1}
\bigoplus_{|G|}\text{Ext}_A^i(k,k) \cong \bigoplus_{|G|}\text{Ext}_{A^{G,\mu}}^i(k,k), 
\end{equation}
at the level of vector spaces. Furthermore, as $AG_{\mu}$ is an $\N$-graded left module over $A$ and over $A^{G,\mu}$, the isomorphism in \eqref{eq: comparedims1} respects the $\Z$-graded structure. 

Since $A$ is Koszul, we know that the $\Z$-graded components of the left hand side of \eqref{eq: comparedims1} vanish in all degrees other than degree $i$. It follows that $\text{Ext}_{A^{G,\mu}}^i(k,k)$ must also vanish in all degrees other than degree $i$, hence $A^{G,\mu}$ must be Koszul.
\end{proof}

\subsection{The Cohen-Macaulay property and Auslander regularity}\label{subsec: cohenmac} 
In this section we will prove that several more homological properties of algebras are preserved under cocycle twists.

The definitions that follow can all be found in \cite[\S 1.2]{levasseur1993modules}. The \emph{grade} of a finitely generated left or right $A$-module $M$ is defined to be the value
\begin{equation*}%\label{eq: cmgradedefn}
j_A(M)=\text{inf}\{i: \text{Ext}_A^i(M,A)\neq 0\} \in \N \cup \{+\infty\}. 
\end{equation*}
\begin{defn}\label{def: cm}
A ring $A$ is said to satisfy the \emph{Cohen-Macaulay property} or be \emph{Cohen-Macaulay} (CM) if for all non-zero finitely generated $A$-modules $M$, one has
\begin{equation*}%\label{eq: cohenmacprop}
\text{GKdim }M+j_A(M)=\text{GKdim }A.
\end{equation*} 
\end{defn}
\begin{defn}\label{def: auslanderprops}
The ring $A$ satistfies the \emph{Auslander-Gorenstein condition} if for every finitely generated left or right module $M$, all $i \geq 0$ and every $A$-submodule $N$ of $\text{Ext}^i_A(M,A)$, one has $j_A(N)\geq i$.

The ring is \emph{Auslander-Gorenstein} if it satisfies the Auslander-Gorenstein condition and it has finite left and right injective dimension. It is said to be \emph{Auslander regular} if in addition to satisfying the Auslander-Gorenstein condition it has finite global dimension. 
\end{defn}

The following result shows that all of the properties defined above are preserved under a cocycle twist.
\begin{proposition}\label{prop: cohenmac}
In addition to Hypotheses \ref{hyp: generalcase}, assume that $A$ is noetherian. Then $A$ has one of the following
properties if and only if $A^{G,\mu}$ does as well:
\begin{itemize}
 \item[(i)] it is Cohen-Macaulay;
 \item[(ii)] it is Auslander-Gorenstein;
 \item[(iii)] it is Auslander regular.
\end{itemize}
\end{proposition}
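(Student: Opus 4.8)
The plan is to imitate the strategy used for the AS-Gorenstein property in Proposition \ref{prop: asgor} and the Koszul property in Proposition \ref{prop: koszul}, transporting the relevant homological data back and forth between $A$, $A^{G,\mu}$, and the intermediate ring $AG_{\mu}$ via faithful flatness. The key structural input is Lemma \ref{lem: fflat}: $AG_{\mu}$ is a finite free module, hence a faithfully flat extension, on both sides over each of $A$ and $A^{G,\mu}$, and each subalgebra is a bimodule direct summand of $AG_{\mu}$. Combined with Lemma \ref{lem: hilbseries}(i), which gives $\text{GKdim } A = \text{GKdim } A^{G,\mu}$, this should let us reduce each of the three properties to the corresponding property of $AG_{\mu}$ and then relate that to $A$.

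First I would handle the grade. For a finitely generated $A^{G,\mu}$-module $M$, I would apply Proposition \ref{prop: brownlevass} with $R = X = A^{G,\mu}$, $S = AG_{\mu}$, obtaining $\text{Ext}^i_{A^{G,\mu}}(M,A^{G,\mu}) \otimes_{A^{G,\mu}} AG_{\mu} \cong \text{Ext}^i_{AG_{\mu}}(AG_{\mu} \otimes_{A^{G,\mu}} M, AG_{\mu})$, and since $AG_{\mu}$ is faithfully flat over $A^{G,\mu}$ this shows $j_{A^{G,\mu}}(M) = j_{AG_{\mu}}(AG_{\mu} \otimes_{A^{G,\mu}} M)$; the same argument over $A$ gives $j_A(N) = j_{AG_{\mu}}(AG_{\mu} \otimes_A N)$. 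To compare GK dimension of modules I would use that $AG_{\mu} \otimes_{A^{G,\mu}} M$ is a finite module over $M$-via-$AG_{\mu}$ on one side and a finite extension argument (as in the proof of Lemma \ref{lem: hilbseries}, via \cite[Proposition 5.5]{krause2000growth}) to get $\text{GKdim}_{AG_{\mu}}(AG_{\mu} \otimes_{A^{G,\mu}} M) = \text{GKdim}_{A^{G,\mu}} M$. Putting these together, the Cohen-Macaulay identity $\text{GKdim } M + j(M) = \text{GKdim}$ holds for all finitely generated $A^{G,\mu}$-modules if and only if it holds for all finitely generated $AG_{\mu}$-modules of the form $AG_{\mu}\otimes_{A^{G,\mu}} M$ — and since $AG_{\mu}$ is finite free over $A^{G,\mu}$, every finitely generated $AG_{\mu}$-module is a quotient of such, which combined with the behaviour of grade and GK dimension under finite extensions should give the equivalence. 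The same chain run with $A^{G,\mu}$ replaced by $A$ shows CM for $AG_{\mu}$ is equivalent to CM for $A$, completing (i). For (ii), the Auslander-Gorenstein condition "$j_A(N) \geq i$ for every submodule $N \subseteq \text{Ext}^i_A(M,A)$" is likewise transported: submodules of $\text{Ext}^i_{A^{G,\mu}}(M,A^{G,\mu})$ correspond after applying the faithfully flat functor $- \otimes_{A^{G,\mu}} AG_{\mu}$ to submodules of $\text{Ext}^i_{AG_{\mu}}(AG_{\mu}\otimes M, AG_{\mu})$ whose grade we have already shown matches; finite injective dimension on each side passes through $AG_{\mu}$ by the crossed product result \cite[Theorem 7.5.6]{mcconnell2001noncommutative} as used in Proposition \ref{prop: gldim}, or directly via \cite[Theorem 7.2.8]{mcconnell2001noncommutative} plus faithful flatness. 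Part (iii) is then immediate: Auslander regular means Auslander-Gorenstein plus finite global dimension, and the latter is Proposition \ref{prop: gldim}.

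The main obstacle I anticipate is the submodule-correspondence step in the Auslander-Gorenstein condition: one must check that the faithfully flat base change $- \otimes_{A^{G,\mu}} AG_{\mu}$ sets up an inclusion-preserving correspondence between $A^{G,\mu}$-submodules of an Ext group and $AG_{\mu}$-submodules of the base-changed Ext group that is compatible enough with grade to transfer the inequality $j(N) \geq i$ in both directions. Flatness gives that $N \otimes AG_{\mu} \hookrightarrow \text{Ext}^i \otimes AG_{\mu}$, and faithful flatness lets one detect submodule containments and non-vanishing of Ext downstairs from upstairs, but one has to be slightly careful that \emph{every} $AG_{\mu}$-submodule of the upstairs Ext is, or contains with the same grade, one coming from downstairs — here I would lean on the fact that over the finite extension $A^{G,\mu} \subseteq AG_{\mu}$ every $AG_{\mu}$-submodule is in particular a finitely generated $A^{G,\mu}$-submodule, so its grade as computed over $A^{G,\mu}$ equals its grade over $AG_{\mu}$ (again by Proposition \ref{prop: brownlevass} applied to this submodule), reducing the $AG_{\mu}$-condition to the $A^{G,\mu}$-condition restricted to such submodules, and then a standard argument upgrades this to all $A^{G,\mu}$-submodules. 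The remaining bookkeeping — GK dimension of modules under finite extensions, and the two-sidedness of all statements — is routine given the tools already assembled in the paper.
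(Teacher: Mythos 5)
Your overall strategy is the one the paper uses: route everything through the intermediate ring $AG_{\mu}$, using Lemma \ref{lem: fflat} for faithful flatness, Proposition \ref{prop: brownlevass} to base-change Ext groups and hence grades, and finite-extension comparisons of grade and GK dimension for the step between $A$ and $AG_{\mu}$. For part (i) your sketch matches the paper's argument (which makes the finite-extension comparisons precise by citing a grade-equality lemma of Ardakov--Brown and a GK-dimension lemma of Lorenz, together with \cite[Propositions 5.5 and 5.6]{krause2000growth}); your remark that every f.g.\ $AG_{\mu}$-module is a quotient of an induced one is an unnecessary detour --- one simply restricts an $AG_{\mu}$-module to $A$ and uses that grade and GK dimension are unchanged under the finite free extension --- but it does not break anything.

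There are, however, two concrete gaps. First, for the finite injective dimension part of (ii), the tools you cite --- \cite[Theorems 7.2.8 and 7.5.6]{mcconnell2001noncommutative} --- are statements about \emph{global} dimension, and the argument of Proposition \ref{prop: gldim} does not transfer to injective dimension (there is no analogue of ``$\text{pdim}\,S_R=0$ kills the correction term'' for injective dimension of the ring over itself). The paper instead observes that $AG_{\mu}$ is strongly $G$-graded in two different ways, with identity components $A$ and $A^{G,\mu}$ respectively, and applies a result of N\u{a}st\u{a}sescu on injective dimension over strongly graded rings to get $\text{idim}\,A=\text{idim}\,AG_{\mu}=\text{idim}\,A^{G,\mu}$. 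Some such substitute is needed; as written your step does not go through. Second, for the Auslander condition itself, you correctly identify the submodule-tracking issue as the obstacle, but you leave the hard direction --- ascending from $A$ to the crossed product $AG_{\mu}$, where one must bound the grade of an \emph{arbitrary} $AG_{\mu}$-submodule of $\text{Ext}^i_{AG_{\mu}}(M,AG_{\mu})$ using only the Auslander condition over $A$ --- at the level of ``a standard argument upgrades this.'' That is precisely the content of the crossed-product result \cite[Proposition 3.9(i)]{yi1995injective} that the paper invokes; the descent from $AG_{\mu}$ to $A^{G,\mu}$ (which your base-change argument does handle, since one only needs to base-change submodules upward and compare grades) is covered by \cite[Theorem 2.2(iv)]{teo1996homological} using flatness alone. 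So the architecture of your proof is right, but these two steps need either the external results the paper cites or complete arguments of your own.
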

\begin{proof}
(i) Assume that $A$ is Cohen-Macaulay. As we proved in Lemma
\ref{lem: hilbseries}(i), $\text{GKdim }A=\text{GKdim }AG_{\mu}=\text{GKdim }A^{G,\mu}$. Let $M$ be a f.g\ right
$AG_{\mu}$-module. It must also be f.g.\ as an $A$-module since the extension $A \subset AG_{\mu}$ is
finite by Lemma \ref{lem: fflat}. By \cite[Lemma 5.4]{ardakov2007primeness} it is clear
that the grades of $M_{AG_{\mu}}$ and $M_A$ are equal. One can then apply \cite[Lemma
1.6]{lorenz1988on} to conclude that $\text{GKdim }M_{AG_{\mu}}=\text{GKdim }M_{A}$. 

Piecing this together, we find that
\begin{gather}
\begin{aligned}\label{eq: cohenaagmu}
\text{GKdim }M_{AG_{\mu}}+ j_{AG_{\mu}}(M)= \text{GKdim }M_A+j_A(M) &=\text{GKdim }A \\ &=\text{GKdim }AG_{\mu},
\end{aligned}
\end{gather}
and therefore $AG_{\mu}$ is Cohen-Macaulay. 

Now let $M$ be a f.g.\ right $A^{G,\mu}$-module. By applying a right-sided version of Proposition \ref{prop: brownlevass} with $R=X=A^{G,\mu}$ and $S=AG_{\mu}$ we obtain
\begin{equation*}%\label{eq: brownlevasscohenmacappl}
AG_{\mu} \otimes_{A^{G,\mu}} \text{Ext}_{A^{G,\mu}}^i \left(M,A^{G,\mu}\right) \cong \text{Ext}_{AG_{\mu}}^i\left(M \otimes_{A^{G,\mu}} AG_{\mu},AG_{\mu}\right).
\end{equation*}
When combined with faithful flatness of the extension $A^{G,\mu} \subset AG_{\mu}$ by Lemma \ref{lem: fflat}, this implies that
\begin{equation*}%\label{eq: gradereln}
j_{A^{G,\mu}}(M)=j_{AG_{\mu}}(M \otimes_{A^{G,\mu}} AG_{\mu}). 
\end{equation*}

By faithful flatness of the extension $A^{G,\mu} \subset AG_{\mu}$, $M$ is contained in $M \otimes_{A^{G,\mu}}
AG_{\mu}$. Therefore by the definition of GK dimension one has 
\begin{equation}\label{eq: gkineq1}
\text{GKdim }M_{A^{G,\mu}} \leq \text{GKdim }(M \otimes_{A^{G,\mu}} AG_{\mu})_{A^{G,\mu}}. 
\end{equation}
By \cite[Proposition 5.6]{krause2000growth} one has the inequality 
\begin{equation}\label{eq: gkineq2}
\text{GKdim }M_{A^{G,\mu}} \geq \text{GKdim }(M \otimes_{A^{G,\mu}} AG_{\mu})_{AG_{\mu}}.
\end{equation}

Applying \cite[Lemma 1.6]{lorenz1988on} to $M \otimes_{A^{G,\mu}} AG_{\mu}$ and using the inequalities in
\eqref{eq: gkineq1} and \eqref{eq: gkineq2} shows that $\text{GKdim }M_{A^{G,\mu}}=\text{GKdim }(M \otimes_{A^{G,\mu}}
AG_{\mu})_{AG_{\mu}}$. One can then see from an equality like that in \eqref{eq: cohenaagmu} that the
Cohen-Macaulay property is preserved under cocycle twists.

(ii) Using \cite[Proposition 3.9(i)]{yi1995injective} one can see that if $A$ satisfies the Auslander condition then so must $AG_{\mu}$. The twist $A^{G,\mu}$ then satisfies the Auslander condition by \cite[Theorem 2.2(iv)]{teo1996homological}, since the only hypothesis needed is that the extension be flat -- this is true by Lemma \ref{lem: fflat}. 

It remains to show that injective dimension is preserved and, by symmetry it suffices to show this for left injective dimension. Consider the $G$-grading on $AG_{\mu}$ for which $(AG_{\mu})_g = A \otimes g$ for all $g \in G$. Under this grading $AG_{\mu}$ is a strongly $G$-graded ring, thus one can apply \cite[Corollary 2.7]{nastasescu1983strongly} with $R = N = AG_{\mu}$ and $\sigma = e$. That result implies that
\begin{equation*}%\label{eq: leftinjdim}
\text{idim }_{AG_{\mu}}AG_{\mu} = \text{idim }_{(AG_{\mu})_{e}}(AG_{\mu})_{e}= \text{idim }_{A}A.
\end{equation*}

Now consider the $G$-grading on $AG_{\mu}$ under which $(AG_{\mu})_g = A^{G,\mu}(1 \otimes g)$ for all $g \in G$. This $G$-grading is induced by the diagonal action of $G$ on $AG_{\mu}$. It is clear that $AG_{\mu}$ is a strongly $G$-graded ring under this grading as well. One can therefore apply \cite[Corollary 2.7]{nastasescu1983strongly} once again to see that 
\begin{equation*}%\label{eq: leftinjdim}
\text{idim }_{AG_{\mu}}AG_{\mu} = \text{idim }_{(AG_{\mu})_{e}}(AG_{\mu})_{e}= \text{idim }_{A^{G,\mu}}A^{G,\mu}.
\end{equation*}

(iii) We saw in the proof of (ii) that the Auslander condition is preserved. One can then use Proposition \ref{prop: gldim} to show that the global dimensions of $A$ and $A^{G,\mu}$ are equal, which completes the proof.
\end{proof}

\section{Twists in relation to Rogalski and Zhang's classification}\label{sec: rogzhang}
We will now apply the prior theory to the work in \cite{rogalski2012regular}. As such, we will assume that $\text{char}(k)=0$ for the duration of this section.

Rogalski and Zhang classify AS-regular domains of dimension 4 satisfying two extra conditions: they are generated by three degree 1 elements and admit a proper $\Z^{2}$-grading. Properness of such a grading, $A= \bigoplus_{n,m \in \Z} A_{m,n}$ say,
means that $A_{0,1}\neq 0$ and $A_{1,0}\neq 0$. Their main results are summarised in the following theorem.

\begin{theorem}[{\cite[Theorems 0.1 and 0.2]{rogalski2012regular}}]\label{theorem: rogzhangmain}
Let $A$ be an AS-regular domain of dimension 4 which is generated by three degree 1 elements and properly
$\Z^{2}$-graded. Then either $A$ is a normal extension of an AS-regular algebra of dimension 3, or up to isomorphism it falls into one of eight 1 or 2 parameter families, $\mathcal{A}-\mathcal{H}$. Moreover, any such algebra is strongly noetherian, Auslander regular and Cohen-Macaulay.
\end{theorem}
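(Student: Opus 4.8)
This is a deep classification result and we only indicate the strategy, which is that of Rogalski and Zhang; the full argument is in \cite{rogalski2012regular}. The plan is to first pin down the possible homological ``shapes'' of such an algebra, then exploit the proper $\Z^{2}$-grading to cut the relations down to a short list, and finally read off the homological finiteness properties family by family. So suppose $A$ is AS-regular of dimension $4$, generated by three elements of degree $1$. Then the trivial module $k$ has a minimal graded free resolution
\[
0 \to A(-\ell) \to A(-\ell+1)^{3} \to \bigoplus_{i} A(-d_{i}) \to A(-1)^{3} \to A \to k \to 0,
\]
which by AS-Gorensteinness is self-dual, so $\{d_{i}\} = \{\ell - d_{i}\}$ as multisets. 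Comparing Euler characteristics gives $H_{A}(t)^{-1} = 1 - 3t + \sum_{i} t^{d_{i}} - 3t^{\ell-1} + t^{\ell}$, and the requirement that $H_{A}(t)$ have non-negative coefficients, together with $\text{gldim } A = 4$ and $A$ a (prime, noetherian) domain of GK dimension $4$, leaves only finitely many admissible choices of $\ell$ and of the relation degrees $d_{i}$.

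The next step is to bring in the proper $\Z^{2}$-grading. After a change of variables the three generators and all the relations can be taken bihomogeneous, and properness ($A_{1,0}\neq 0$ and $A_{0,1}\neq 0$) forces the bidegrees of the generators to split in a controlled way. One then runs a finite --- if lengthy --- case analysis on how the quadratic and higher relations can be distributed among these bidegrees. Two outcomes occur: either $A$ possesses a normal element of positive degree whose factor is AS-regular of dimension $3$ (the ``normal extension'' case, handled by appeal to the classification of dimension-$3$ AS-regular algebras of \cite{artin1990some} and its variants), or the bihomogeneous relations become rigid enough to be listed explicitly, producing the eight one- or two-parameter families $\mathcal{A}$--$\mathcal{H}$. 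In the latter situation one must still verify AS-regularity of each candidate: I would do this by exhibiting each family as an iterated Ore extension, or equivalently by checking that the listed relations form a noncommutative Gröbner basis realising the predicted Hilbert series and a self-dual minimal resolution.

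For the ``moreover'' assertion, recall that strong noetherianity, Auslander regularity and the Cohen--Macaulay property all hold for AS-regular algebras of dimension $3$ (which are noetherian domains with these properties by the work of Artin--Tate--Van den Bergh and Levasseur; see \cite{artin1990some, levasseur1992some, levasseur1993modules}), and are inherited by normal extensions. Thus they pass to the algebras in the normal-extension case, while the iterated-Ore-extension structure found above yields them for the families $\mathcal{A}$--$\mathcal{H}$. Alternatively, once \S\ref{sec: rogzhang} exhibits several of these families as cocycle twists of one another, Theorem \ref{thm: maintheorem} (via Proposition \ref{prop: uninoeth}, Theorem \ref{thm: asreg} and Proposition \ref{prop: cohenmac}) propagates all three properties among twist-equivalent families, reducing the check to one representative per twist class.

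The genuine difficulty --- and the bulk of the work in \cite{rogalski2012regular} --- is the middle step: controlling the combinatorial explosion of possible bidegrees and relation patterns, verifying the Gorenstein symmetry and Hilbert series for each surviving candidate, and then tightening the list into an \emph{exhaustive} classification up to isomorphism while eliminating redundant parameters. The homological consequences at the end, by contrast, are comparatively formal once the list and the Ore-extension (or normal-extension) presentations are in hand.
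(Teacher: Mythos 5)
This statement is not proved in the paper at all: it is imported verbatim from Rogalski and Zhang \cite{rogalski2012regular} and used as background for \S\ref{sec: rogzhang}, so the only ``proof'' the paper offers is the citation, and there is no internal argument to compare yours against. As an outline of the cited proof, your sketch is broadly faithful: the shape and self-duality of the minimal graded free resolution of $k$, the Euler-characteristic constraint on $H_A(t)$ cutting the possible relation degrees down to a short list, the dichotomy between the normal-extension case and the explicitly listed bihomogeneous families, and the derivation of the ``moreover'' clause from known dimension-3 results together with lifting along normal extensions are indeed the main steps of that classification. Two caveats. First, your proposal to certify AS-regularity of each family by realising it as an iterated Ore extension is optimistic --- not all of $\mathcal{A}$--$\mathcal{H}$ admit such a presentation, and Rogalski--Zhang instead argue via normal regular elements, Zhang twists of known regular algebras, and direct resolution/Gr\"obner computations; your hedge towards Gr\"obner bases is the safer half of that sentence. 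Second, your ``alternative'' route through Theorem \ref{thm: maintheorem} and the twist isomorphisms of Theorem \ref{theorem: rogzhangmymain} must be handled carefully: in the paper's logical order those isomorphisms are established \emph{after} this theorem and are applied \emph{to} algebras already known to be regular, so invoking them here only reduces the verification to one representative per twist class and, stated carelessly, would be circular. Since the statement functions purely as quoted background, the appropriate treatment in this paper is the citation itself, with your sketch serving as a useful gloss rather than a proof.
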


In order to study cocycle twists of such algebras we require graded algebra automorphisms, where in this case graded refers to the c.g.\ structure rather than the additional $\Z^{2}$-grading. Section 5 of Rogalski and Zhang's paper is concerned with precisely this topic. The key result is the following, where generic means avoiding some finite set of parameters given in the statement of \cite[Lemma 5.1]{rogalski2012regular}.
\begin{theorem}[{\cite[Theorem 5.2(a)]{rogalski2012regular}}]\label{theorem: rogzhangauts}
Consider a generic AS-regular algebra $A$ in one of the families $\mathcal{A} - \mathcal{H}$. The graded automorphism group of $A$ is isomorphic either to $k^{\times}\times k^{\times}$ or to $k^{\times}\times
k^{\times} \times C_2$. The first case occurs for the families $\mathcal{A}(b,q)$ with $q \neq -1$, $\mathcal{D}(h,b)$ with $h
\neq b^4$, $\mathcal{F}$ and $\mathcal{H}$. The second case occurs if $A$ belongs to one of the families $\mathcal{A}(b,-1)$, $\mathcal{B}$, $\mathcal{C}$, $\mathcal{D}(h,b)$ with $h=b^4$, $\mathcal{E}$ or $\mathcal{G}$.
\end{theorem}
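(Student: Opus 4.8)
The plan is to reduce the whole computation to linear algebra on the three--dimensional space $A_1$. Since every algebra $A$ in the families $\mathcal{A}$--$\mathcal{H}$ is connected graded and generated by $A_1$, a graded algebra automorphism is determined by its restriction to $A_1$, so there is an injective homomorphism $\mathrm{Aut}_{\N\text{-alg}}(A)\hookrightarrow \mathrm{GL}(A_1)\cong \mathrm{GL}_3(k)$. An element $g\in\mathrm{GL}(A_1)$ lies in the image precisely when the automorphism it induces on the free algebra $T(A_1)$ preserves the two--sided ideal of relations; as that ideal is generated in a few low degrees, this is equivalent to $g$ preserving the finite--dimensional, $\Z^2$--homogeneous subspace $R\subseteq T(A_1)$ spanned by the defining relations in Rogalski and Zhang's presentation of the family. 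The theorem then amounts to computing, for generic parameters, the stabiliser of $R$ inside $\mathrm{GL}_3(k)$.

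The second step is to extract rigidity from the proper $\Z^2$--grading. Because $A$ is generated in degree $1$ with $A_{1,0}\neq 0\neq A_{0,1}$, the three degree--$1$ generators are $\Z^2$--homogeneous, and their multidegrees decompose $A_1$ into $\Z^2$--homogeneous pieces (with a distinguished two--dimensional piece, which carries the $\mathrm{GL}_2$--worth of linear freedom). Rescaling each $\Z^2$--homogeneous generator gives automorphisms, and since $R$ is $\Z^2$--homogeneous the relations cut this three--dimensional torus of rescalings down to a two--dimensional subtorus $k^\times\times k^\times\subseteq\mathrm{Aut}_{\N\text{-alg}}(A)$ in every family; this is the asserted lower bound. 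Genericity enters exactly here: at the finitely many excluded parameter values the relations acquire extra structure, but away from them one checks that any $g$ stabilising $R$ must respect the $\Z^2$--grading up to the obvious symmetry of the grading group, hence lies in the block subgroup $k^\times\times\mathrm{GL}_2(k)$.

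One then works family by family: substitute a block element into the explicit $\Z^2$--homogeneous relations of each of $\mathcal{A}$--$\mathcal{H}$ and solve for the block elements fixing $R$. For $\mathcal{A}(b,q)$ with $q\neq -1$, $\mathcal{D}(h,b)$ with $h\neq b^4$, $\mathcal{F}$ and $\mathcal{H}$ the relations turn out to be rigid enough that nothing beyond the torus survives, giving $\mathrm{Aut}_{\N\text{-alg}}(A)\cong k^\times\times k^\times$. For the remaining families $\mathcal{A}(b,-1)$, $\mathcal{B}$, $\mathcal{C}$, $\mathcal{D}(b^4,b)$, $\mathcal{E}$ and $\mathcal{G}$ --- precisely the cases where a parameter specialises --- the relations become invariant under an involution $\tau$ of the two--dimensional piece (an interchange of its two distinguished generators), which therefore also stabilises $R$; since the torus acts on that piece through scalars only, $\tau$ commutes with it, so $\langle k^\times\times k^\times,\tau\rangle\cong k^\times\times k^\times\times C_2$, and the same case computation shows the stabiliser is no larger.

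The main obstacle is the bookkeeping in this last step rather than any single conceptual difficulty: one must write down the (somewhat lengthy) defining relations for all eight families, pin down exactly the finite parameter set to exclude so that the $\Z^2$--rigidity argument of the second paragraph applies, and then verify in each family both that the stabiliser of $R$ is no bigger than claimed and --- in the six special families --- that the extra involution genuinely commutes with the torus, so that the extension splits as the direct product $k^\times\times k^\times\times C_2$. The reductions in the first two paragraphs are formal; the real labour, and the only serious source of error, lies in the per--family verification.
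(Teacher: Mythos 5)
First, a point of orientation: the paper does not prove this statement at all --- it is imported verbatim from Rogalski and Zhang \cite[Theorem 5.2(a)]{rogalski2012regular} and used only as an input to Theorem \ref{theorem: rogzhangmymain}. So there is no internal proof to compare against, and your proposal has to be judged against the assertion itself. Your framework is the standard and correct one: a graded ($\N$-graded) automorphism of a connected graded algebra generated in degree $1$ is determined by an element of $\mathrm{GL}(A_1)\cong\mathrm{GL}_3(k)$ preserving the graded pieces of the relation ideal, so the problem is a stabiliser computation; the torus $k^{\times}\times k^{\times}$ comes from scaling the two $\Z^{2}$-components $A_{1,0}$ and $A_{0,1}$; and the extra $C_2$, when present, is the quasi-trivial involution $x_1\leftrightarrow x_2$, which commutes with the torus because the torus acts on $A_{1,0}$ by a single scalar, so the product is direct.

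The gap is that the proposal stops exactly where the content of the theorem begins. Two steps are asserted but never carried out: (a) that for generic parameters any element of $\mathrm{GL}_3(k)$ stabilising the relation space must preserve the decomposition $A_1=A_{1,0}\oplus A_{0,1}$ and hence lie in $k^{\times}\times\mathrm{GL}(A_{1,0})$ --- this is where genericity actually enters, it is not formal (an automorphism for the $\N$-grading has no a priori reason to respect the $\Z^{2}$-grading), and it can only be checked against the explicit relations; and (b) the family-by-family determination of which elements of $\mathrm{GL}(A_{1,0})$ survive. Step (b) is the only thing that can produce the dichotomy in the statement --- for instance, why $\mathcal{A}(b,q)$ acquires the extra $C_2$ precisely when $q=-1$, or $\mathcal{D}(h,b)$ precisely when $h=b^4$ --- and without it your argument establishes only the lower bound $k^{\times}\times k^{\times}\subseteq\mathrm{Aut}_{\N\text{-alg}}(A)$ and the plausibility of the case split, not the case split itself. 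You acknowledge this by calling it ``bookkeeping,'' but all eight computations, together with the precise finite exclusion set of \cite[Lemma 5.1]{rogalski2012regular} needed to make (a) valid, constitute the proof; as written the proposal is a correct plan rather than a proof.
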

The automorphisms corresponding to $k^{\times}\times k^{\times}$ come from scaling components of the $\Z^{2}$-grading.

Let us fix some notation for the remainder of the section. The algebra $A$ will be generated by the three degree 1 elements $x_1,x_2$ and $x_3$, where $x_1, x_2 \in A_{1,0}$ and $x_3 \in A_{0,1}$. We will follow Rogalski and Zhang in referring to the extra automorphism of order 2 as the \emph{quasi-trivial} automorphism. This automorphism interchanges $x_1$ and $x_2$ whilst fixing $x_3$.

We now move onto the main result of this section, in which we show that the presence of the quasi-trivial automorphism implies the existence of cocycle twists relating algebras in different families. We will use the notation $v_i$ to denote generators of a cocycle twist when we wish to suppress the new multiplication symbol $\ast_{\mu}$.
\begin{theorem}\label{theorem: rogzhangmymain}
Let $G=(C_2)^2=\langle g_1, g_2\rangle$ and let $\mu$ denote the 2-cocycle on $G$ defined by 
\begin{equation*}
\mu(g_1^p g_2^q, g_1^r g_2^s) = (-1)^{ps}
\end{equation*}
for all $p,q,r,s \in \{0,1\}$. Fix the isomorphism $G \cong G^{\vee}$ given by $g \mapsto \chi_g$, where 
\begin{equation*}
\chi_g(h) = \left\{ \begin{array}{cl} 1 & \text{if }g=e\text{ or }h \in \{e, g\} \\ -1 & \text{otherwise}, \end{array}\right.
\end{equation*}
for all $g, h \in G$. Then there are $k$-algebra isomorphisms
\begin{align*}%\label{eq: rogzhangmymainisos}
\mathcal{A}(1,-1)^{G,\mu}\cong \mathcal{D}(1,1),\;\; \mathcal{B}(1)^{G,\mu} &\cong \mathcal{C}(1), \;\;
\mathcal{E}(1,\gamma)^{G,\mu}\cong \mathcal{E}(1,-\gamma), \\ 
\mathcal{G}(1,\gamma)^{G,\mu} &\cong \mathcal{G}(1,\overline{\gamma}).
\end{align*}
\end{theorem}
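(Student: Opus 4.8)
The plan is to compute each of the four cocycle twists explicitly using the generators-and-relations description. By Lemma \ref{lem: defrelns} and Lemma \ref{lemma: finitelygenerated}, if we take for $A$ one of the algebras $\mathcal{A}(1,-1)$, $\mathcal{B}(1)$, $\mathcal{E}(1,\gamma)$ or $\mathcal{G}(1,\gamma)$, then $A^{G,\mu}$ is again generated in degree $1$ by (images of) $x_1,x_2,x_3$ and is defined by the images of the $G$-homogeneous defining relations of $A$ under the twisted multiplication $\ast_\mu$. So the first step is, for each family, to exhibit the action of $G=(C_2)^2$ on $A$ by graded automorphisms: $g_1$ should act as a scaling automorphism from the $k^\times\times k^\times$ part of $\operatorname{Aut}_{\N\text{-alg}}(A)$ (Theorem \ref{theorem: rogzhangauts}) — concretely the one sending $x_3\mapsto -x_3$ and fixing $x_1,x_2$ — and $g_2$ should act as the quasi-trivial automorphism, interchanging $x_1\leftrightarrow x_2$ and fixing $x_3$. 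One checks these commute and each has order $2$, so this is a genuine $(C_2)^2$-action, and $\operatorname{char}(k)=0$ makes Hypotheses \ref{hyp: gradedcase} applicable.

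The second step is to read off the induced $G$-grading. With the fixed isomorphism $G\cong G^\vee$ given in the statement, $A_g=A^{\chi_{g^{-1}}}$; since every element of $G$ is an involution this is just $A^{\chi_g}$. A short computation gives the eigenbasis: taking $y_\pm = x_1\pm x_2$, one finds $y_+$ sits in the component where $g_1$ acts trivially and $g_2$ acts trivially, $y_-$ where $g_1$ acts trivially and $g_2$ acts by $-1$, and $x_3$ where $g_1$ acts by $-1$ and $g_2$ acts trivially; so $y_+\in A_e$, $y_-\in A_{g_2}$, $x_3\in A_{g_1}$ (matching characters appropriately). The defining relations of each family, rewritten in the basis $\{y_+,y_-,x_3\}$, are then $G$-homogeneous, so Lemma \ref{lem: defrelns} applies to them directly.

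The third step is the core computation: apply $\ast_\mu$ to each relation and simplify using $\mu(g_1^pg_2^q,g_1^rg_2^s)=(-1)^{ps}$. Concretely, the only products that pick up a sign are those of the form $x_3\ast_\mu y_- = -x_3 y_-$ and $x_3\ast_\mu y_+ = -\dots$ wait — more precisely, a product $u\ast_\mu v$ gets the sign $(-1)$ exactly when $u$ lies in a component with a $g_1$-factor and $v$ lies in a component with a $g_2$-factor, i.e. essentially products $x_3\cdot(\text{something involving }y_-)$; all other products are unchanged and $y_-\ast_\mu y_- = y_-^2$, $x_3\ast_\mu x_3=x_3^2$. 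Substituting back and rewriting in terms of $v_1,v_2,v_3$ (the twisted generators, with $v_1+v_2=y_+$, $v_1-v_2=y_-$, $v_3=x_3$) one obtains relations; the claim is that after this substitution the relations of $\mathcal{A}(1,-1)^{G,\mu}$ become, after a harmless rescaling of generators, exactly those of $\mathcal{D}(1,1)$, and similarly $\mathcal{B}(1)\rightsquigarrow\mathcal{C}(1)$, $\mathcal{E}(1,\gamma)\rightsquigarrow\mathcal{E}(1,-\gamma)$, $\mathcal{G}(1,\gamma)\rightsquigarrow\mathcal{G}(1,\overline\gamma)$. One then invokes the explicit presentations in \cite{rogalski2012regular} to match parameters: the sign flips produced by $\mu$ are precisely what sends $\gamma\mapsto-\gamma$ in family $\mathcal{E}$ and $\gamma\mapsto\overline\gamma$ in family $\mathcal{G}$, while in the $\mathcal{A}/\mathcal{D}$ and $\mathcal{B}/\mathcal{C}$ cases the sign flip is exactly the difference between the two families' relation sets at the indicated parameter values.

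The main obstacle is purely bookkeeping: one must have the precise relations of families $\mathcal{A},\mathcal{B},\mathcal{C},\mathcal{D},\mathcal{E},\mathcal{G}$ from \cite{rogalski2012regular} at hand, rewrite them in the non-obvious basis $\{y_+,y_-,x_3\}$ (which is where the quasi-trivial automorphism becomes diagonal), carefully track which of the possibly many relations are $G$-homogeneous and what sign each monomial acquires, and then reverse the basis change to recognize the target family. There is also a small subtlety to address: one should confirm that the chosen scaling automorphism $g_1$ (i.e. $x_3\mapsto -x_3$) genuinely lies in $\operatorname{Aut}_{\N\text{-alg}}(A)$ for each family — this follows from Theorem \ref{theorem: rogzhangauts} since scaling the $A_{0,1}$-component by $-1$ is one of the $k^\times\times k^\times$ automorphisms — and that $g_1,g_2$ together with the relations are genuinely compatible. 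Once the relations match, the isomorphism of $k$-algebras is immediate, and no further argument (e.g. about AS-regularity) is needed for this particular statement, though Theorem \ref{thm: maintheorem} guarantees a posteriori that both sides are AS-regular, consistent with their appearing in the Rogalski--Zhang list.
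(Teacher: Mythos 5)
Your proposal follows essentially the same route as the paper: let $(C_2)^2$ act through the quasi-trivial automorphism and the sign change on $x_3$, diagonalise via $x_1\pm x_2$, twist the $G$-homogeneous defining relations using Lemma \ref{lem: defrelns}, and match the result against the presentations in \cite{rogalski2012regular}. There is, however, one concrete inconsistency to repair before the sign bookkeeping is trustworthy. You declare that $g_1$ acts by $x_3\mapsto -x_3$ and $g_2$ by the quasi-trivial swap, but then place $y_-=x_1-x_2$ in $A_{g_2}$ and $x_3$ in $A_{g_1}$. With the stated characters, $A_{g_1}=A^{\chi_{g_1}}$ is the component on which $g_1$ acts trivially and $g_2$ acts by $-1$, and $A_{g_2}$ is the component on which $g_1$ acts by $-1$ and $g_2$ acts trivially; so the action you wrote down actually puts $y_-$ in $A_{g_1}$ and $x_3$ in $A_{g_2}$, the opposite of what you claim. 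This matters because $\mu$ is not symmetric: $\mu(g_1,g_2)=-1$ while $\mu(g_2,g_1)=1$, so which of $x_3$ and $y_-$ carries the $g_1$-label determines which \emph{ordered} products acquire a sign. Your stated sign rule (the sign appears in products of the form $x_3\cdot(\text{something involving }y_-)$) belongs to the grading with $x_3\in A_{g_1}$, i.e.\ to the other choice of action --- the one the paper uses, with $g_1$ quasi-trivial and $g_2$ negating $x_3$. Either swap the action back, or keep your action and redo the signs; in the latter case the final isomorphisms still come out the same, because the two conventions differ by the automorphism of $G$ interchanging $g_1$ and $g_2$, under which $\mu$ changes only by a coboundary (Lemma \ref{lem: autoncocycle}, together with the observation that $\mu(g,h)\mu^{\sigma}(g,h)^{-1}$ is symmetric and hence trivial in the Schur multiplier of $(C_2)^2$) --- but as written the stated action and the stated sign rule cannot both be used. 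Beyond this, the remaining work is exactly the bookkeeping you describe, and no rescaling of generators is actually needed: the twisted relations match those of $\mathcal{D}(1,1)$, $\mathcal{C}(1)$, $\mathcal{E}(1,-\gamma)$ and $\mathcal{G}(1,\overline{\gamma})$ on the nose, up to an overall scalar on each relation.
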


In order to save space we will not write out the defining relations of these algebras, but recommend that the reader has \cite{rogalski2012regular} to refer to when reading the proof.
\begin{proofof}{\ref{theorem: rogzhangmymain}}
Let us begin by defining the action of $G$ which we will use for each of the cocycle twists we perform. Note that all of the algebras in the statement of the result admit the quasi-trivial automorphism. Therefore we can let $g_1$ act via the quasi-trivial automorphism and $g_2$ act by multiplying $x_3$ by -1 and fixing the other two generators. 

Since the standard generators are not diagonal with respect to this action, we will instead use the generators
 \begin{equation*}
 w_1=x_1+x_2,\;\; w_2=x_1-x_2,\;\; w_3=x_3,
 \end{equation*}
which are homogeneous with respect to any induced $G$-grading, since all automorphisms act on them diagonally. Denoting
the algebra we wish to twist by $A$, the induced $G$-grading on the new generators is given by
\begin{equation*}%\label{eq: rogzhanggrading}
w_1 \in A_e,\;\; w_2 \in A_{g_{2}},\;\; w_3 \in A_{g_{1}}. 
\end{equation*}

The defining relations of any algebra in one of the eight families belong to different components of the $\Z^2$-grading. Observe that the algebras $\mathcal{A}(1,-1)$, $\mathcal{B}(1)$, $\mathcal{C}(1)$ and $\mathcal{D}(1,1)$ share three relations, only being distinguished from each other by their relations in the $(2,1)$-component. Writing the shared relations in terms of the diagonal basis we show that they are left invariant under the twist. 

The first two are quadratic relations:
\begin{align*}%\label{eq: rogzhangAreln1}
0 &= w_1^2 - w_2^2 = \frac{w_1 \ast_{\mu} w_1}{\mu(e,e)} - \frac{w_2 \ast_{\mu} w_2}{\mu(g_2,g_2)} = w_1 \ast_{\mu} w_1 -
w_2 \ast_{\mu} w_2 = v_1^2 - v_2^2,\\
0 &= w_3 w_1 - w_1 w_3 = \frac{w_3 \ast_{\mu} w_1}{\mu(g_1,e)} - \frac{w_1 \ast_{\mu} w_3}{\mu(e,g_1)} = w_3 \ast_{\mu}
w_1 - w_1 \ast_{\mu} w_3 = v_3 v_1 - v_1 v_3,
\end{align*}
while the third relation is cubic:
\begin{align*}%\label{eq: rogzhangAreln1}
0 = w_3^2w_2 - w_2 w_3^2 &= \frac{w_3 \ast_{\mu} w_3 \ast_{\mu} w_2}{\mu(g_1,g_1)\mu(e,g_2)} - \frac{w_2 \ast_{\mu} w_3
\ast_{\mu} w_3}{\mu(g_2,g_1)\mu(g_1g_2,g_1)} \\
&= w_3 \ast_{\mu} w_3 \ast_{\mu} w_2 - w_2 \ast_{\mu} w_3 \ast_{\mu} w_3 \\ &= v_3^2 v_2 - v_2 v_3^2.
\end{align*}

Thus, to verify the first two isomorphisms in the statement of the result it suffices to consider the behaviour under the twist of the only relation they do not share. We first twist this relation in the algebra $\mathcal{A}(1,-1)$, having once again written it in terms of the new generators beforehand:
\begin{align*}%\label{eq: rogzhangAreln1}
0 &= [w_3,[w_1,w_2]_+] \\ 
&= w_3 w_1 w_2 + w_3 w_2 w_1 - w_1 w_2 w_3 - w_2 w_1 w_3 \\
&= \frac{w_3 \ast_{\mu} w_1 \ast_{\mu} w_2}{\mu(g_1,e)\mu(g_1,g_2)} + \frac{w_3 \ast_{\mu} w_2 \ast_{\mu}
w_1}{\mu(g_1,g_2)\mu(g_1g_2,e)} - \frac{w_1 \ast_{\mu} w_2 \ast_{\mu} w_3}{\mu(e,g_2)\mu(g_2,g_1)} - \frac{w_2
\ast_{\mu} w_1 \ast_{\mu} w_3}{\mu(g_2,e)\mu(g_2,g_1)}  \\
&= -w_3 \ast_{\mu} w_1 \ast_{\mu} w_2 - w_3 \ast_{\mu} w_2 \ast_{\mu} w_1 - w_1 \ast_{\mu} w_2 \ast_{\mu} w_3 - w_2
\ast_{\mu} w_1 \ast_{\mu} w_3 \\
&= -[v_3,[v_1,v_2]_+]_+.
\end{align*}
This relation is the same as that in $\mathcal{D}(1,1)$ under the new generators, which proves the first isomorphism. 

Let us now move on to $\mathcal{B}(1)$. Twisting the non-shared relation we see that
\begin{align*}%\label{eq: rogzhangBreln4}
0 &= [w_3,[w_2,w_1]]_+ \\ 
&= w_3 w_2 w_1 - w_3 w_1 w_2 + w_2 w_1 w_3 - w_1 w_2 w_3 \\
&= \frac{w_3 \ast_{\mu} w_2 \ast_{\mu} w_1}{\mu(g_1,g_2)\mu(g_1g_2,e)} - \frac{w_3 \ast_{\mu} w_1 \ast_{\mu}
w_2}{\mu(g_1,e)\mu(g_1,g_2)} + \frac{w_2 \ast_{\mu} w_1 \ast_{\mu} w_3}{\mu(g_2,e)\mu(g_2,g_1)} - \frac{w_1 \ast_{\mu}
w_2 \ast_{\mu} w_3}{\mu(e,g_2)\mu(g_2,g_1)} \\
&= -w_3 \ast_{\mu} w_2 \ast_{\mu} w_1 + w_3 \ast_{\mu} w_1 \ast_{\mu} w_2 + w_2 \ast_{\mu} w_1 \ast_{\mu} w_3 - w_1
\ast_{\mu} w_2 \ast_{\mu} w_3 \\
&= [v_3,[v_1,v_2]].
\end{align*}
This relation is shared by $\mathcal{C}(1)$ under the new generating set, which proves the second isomorphism.

We now move on to the remaining two isomorphisms. The algebras in the relevant families share three relations, two of which we have already shown are preserved under the cocycle twist. This is also true for the third relation, which as yet we have not encountered:
\begin{align*}%\label{eq: rogzhangEtwistreln3}
0 = w_3^2w_2 +w_2w_3^2 &= \frac{w_3 \ast_{\mu} w_3 \ast_{\mu} w_2}{\mu(g_1,g_1)\mu(e,g_2)} + \frac{w_2 \ast_{\mu} w_3
\ast_{\mu} w_3}{\mu(g_2,g_1)\mu(g_1g_2,g_1)} \\&= w_3 \ast_{\mu} w_3 \ast_{\mu} w_2 + w_2 \ast_{\mu} w_3 \ast_{\mu} w_3 \\ &= v_3^2 v_2 +v_2 v_3^2.
\end{align*}

Once again, it suffices to see what happens to the non-shared relation. In $\mathcal{E}(1,\gamma)$, where $i = \sqrt{-1}$ and $\gamma = \pm i$, one has
\begin{align*}%\label{eq: rogzhangEtwistreln4}
0 &= w_3w_2w_1 - w_1w_3w_2 +\gamma w_1w_2w_3 - \gamma w_2w_1w_3 \\
&= \frac{w_3 \ast_{\mu} w_2 \ast_{\mu} w_1}{\mu(g_1,g_2)\mu(g_1g_2,e)} - \frac{w_1 \ast_{\mu} w_3 \ast_{\mu}
w_2}{\mu(e,g_1)\mu(g_1,g_2)} + \gamma \frac{w_1 \ast_{\mu} w_2 \ast_{\mu} w_3}{\mu(e,g_2)\mu(g_2,g_1)} - \gamma \frac{w_2 \ast_{\mu} w_1 \ast_{\mu} w_3}{\mu(g_2,e)\mu(g_2,g_1)} \\
&= -w_3 \ast_{\mu} w_2 \ast_{\mu} w_1 + w_1 \ast_{\mu} w_3 \ast_{\mu} w_2 + \gamma w_1 \ast_{\mu} w_2 \ast_{\mu} w_3 -
\gamma w_2 \ast_{\mu} w_1 \ast_{\mu} w_3 \\
&= -v_3v_2v_1 + v_1v_3v_2 +\gamma v_1v_2v_3 - \gamma v_2v_1v_3.
\end{align*}
This is the final relation in $\mathcal{E}(1,-\gamma)$ under the new generators, which proves the penultimate isomorphism.

We now twist the final relation of $\mathcal{G}(1,\gamma)$, where $\gamma=\frac{1 + i}{2}$ and so $\overline{\gamma}=\frac{1}{2 \gamma}$:
\begin{align*}%\label{eq: rogzhangGrelncob}
0 &= w_3 w_1 w_2 +w_3 w_2 w_1 +i w_1w_2w_3 + i w_2w_1w_3 \\
&= \frac{w_3 \ast_{\mu} w_1 \ast_{\mu} w_2}{\mu(g_1,e)\mu(g_1,g_2)} + \frac{w_3 \ast_{\mu} w_2 \ast_{\mu}
w_1}{\mu(g_1,g_2)\mu(g_1g_2,e)} +i\frac{w_1 \ast_{\mu} w_2 \ast_{\mu} w_3}{\mu(e,g_2)\mu(g_2,g_1)} + i \frac{w_2
\ast_{\mu} w_1 \ast_{\mu} w_3}{\mu(g_2,e)\mu(g_2,g_1)} \\
&= -w_3 \ast_{\mu} w_1 \ast_{\mu} w_2 - w_3 \ast_{\mu} w_2 \ast_{\mu} w_1 + i w_1 \ast_{\mu} w_2 \ast_{\mu}
w_3 + i w_2 \ast_{\mu} w_1 \ast_{\mu} w_3 \\
&= -v_3  v_1  v_2 - v_3  v_2  v_1 + i v_1  v_2  v_3 + i v_2  v_1  v_3.
\end{align*}
This is precisely the final relation of $\mathcal{G}(1,\overline{\gamma})$ under the new generators, which proves the last isomorphism in the statement of the theorem.
\end{proofof}

Combined with the fact that $\mathcal{A}(b,-1)$, $\mathcal{B}(b)$, $\mathcal{C}(b)$, $\mathcal{D}(b,b^4)$,
$\mathcal{E}(b,\gamma)$ and $\mathcal{G}(b,\gamma)$ are Zhang twists of the respective algebras in the statement of Theorem \ref{theorem: rogzhangmymain} for any parameter $b \in k^{\times}$ \cite[\S 3]{rogalski2012regular}, this result gives further information about such algebras up to Zhang twists. 

\bibliographystyle{abbrv}
\bibliography{allrefs}
\end{document}